\documentclass[oneside,english]{amsart}
\usepackage[T1]{fontenc}
\usepackage[latin9]{inputenc}
\usepackage{amsbsy}
\usepackage{amstext}
\usepackage{amsthm}
\usepackage{amssymb}
\usepackage{setspace}

\makeatletter
\allowdisplaybreaks

\author[L. Jiu]{Lin Jiu}
\address{Zu Chongzhi Center, Duke Kunshan University, Kunshan, Suzhou, Jiangsu Province, 215316, PR China}
\email[L. Jiu]{lin.jiu@dukekunshan.edu.cn, lin.jiu.work@gmail.com}

\author[Y. Yin]{Yihang Yin}
\address{Class of 2028, Duke Kunshan University, Kunshan, Suzhou, Jiangsu Province, 215316, PR China}
\email[Y. Yin]{yihang.yin@dukekunshan.edu.cn}

\subjclass[2020]{Primary 11B68, 15A15; Secondary 05A15, 42C05, 11A55.}
\keywords{Hankel determinant, Bernoulli number, continued fraction, orthogonal polynomial, Euler-Mascheroni constant}

\makeatother

\providecommand\theoremname{Theorem}
\theoremstyle{plain}
\newtheorem{thm}{\protect\theoremname}
\providecommand\lemmaname{Lemma}
\newtheorem{lem}[thm]{\protect\lemmaname}
\providecommand\corollaryname{Corollary}
\newtheorem{cor}[thm]{\protect\corollaryname}
\providecommand\remarkname{Remark}
\theoremstyle{remark}
\newtheorem{rem}[thm]{\protect\remarkname}
\providecommand\examplename{Example}
\theoremstyle{definition}
\newtheorem{example}[thm]{\protect\examplename}
\providecommand\propositionname{Proposition}
\theoremstyle{plain}
\newtheorem{prop}[thm]{\protect\propositionname}
\usepackage{babel}
\begin{document}
\title[Hankel Determinants and J-Fractions of Bernoulli Numbers]{Partial Results on Hankel Determinants and the Corresponding J-Fractions
of a Sequence Related to Bernoulli Numbers}
\begin{abstract}
When exploring the Hankel determinant of the sequence $\mu_{k}=B_{k+1}/(k+1)$,
where $B_{k}$ is the $k$-th Bernoulli number, we obtained two interesting
results. The first one applies in general to all sequences $(c_{k})_{k\geq0}$
with all even-indexed term $0$, except for $c_{0}$. In this case,
the coefficient of the second highest order of the corresponding monic
orthogonal polynomials determines the Hankel determinants. Our second
result shows, the corresponding J-fractions, obtained from the generating
function of $\mu_{k}$, is exactly the same as in early work of Cao,
on a faster sequence converging to the Euler--Mascheroni constant
$\gamma$. 
\end{abstract}

\maketitle

\section{Introduction}

A \emph{Hankel matrix} $(M_{i,j})$ is a square matrix with constant
skew diagonals, i.e., $M_{i,j}=M_{i',j'}$ whenever $i+j=i'+j'$.
This terminology was named after Hermann Hankel. Given a sequence
$(c_{k})_{k\geq0}$ in a field $\mathbb{K}$, one may define the associated
Hankel matrices by $\ensuremath{(c_{i+j})_{0\le i,j\le n}}$ and may
also evaluate the determinant of these matrices. Such determinants
\[
H_{n}(c_{k}):=\underset{0\le i,j\le n}{\det}(c_{i+j})=\det\begin{pmatrix}c_{0} & c_{1} & c_{2} & \cdots & c_{n}\\
c_{1} & c_{2} & c_{3} & \cdots & c_{n+1}\\
c_{2} & c_{3} & c_{4} & \cdots & c_{n+2}\\
\vdots & \vdots & \vdots & \ddots & \vdots\\
c_{n} & c_{n+1} & c_{n+2} & \cdots & c_{2n}
\end{pmatrix}
\]
are called the \emph{Hankel determinants} of $(c_{k})_{k\geq0}$.
The choice of $(c_{k})_{k\geq0}$ relies on various reasons, but it
is usually of great interest to study classic number-theoretic sequences.
In recent years, extensive work has been devoted to evaluating Hankel
determinants for various sequences related to \emph{Bernoulli numbers}
$B_{k}$ (see, e.g., \cite{DilcherJiu} with a collection of related
results in the last section), which are defined by their exponential
generating function 
\begin{equation}
\frac{t}{e^{t}-1}=\sum^{\infty}_{k=0}B_{k}\frac{t^{k}}{k!}.\label{eq:DEFBernoulli}
\end{equation}
Al-Salam and Carlitz \cite[Eq.~(3.1), p.~93]{Al-SalamCarlitz} discovered
that 
\[
H_{n}(B_{k})=(-1)^{\binom{n+1}{2}}\prod^{n}_{j=1}\frac{(j!)^{6}}{(2j)!(2j+1)!}=(-1)^{\binom{n+1}{2}}\prod^{n}_{\ell=1}\left(\frac{\ell^{4}}{{4(2\ell+1)(2\ell-1)}}\right)^{n+1-\ell}.
\]
Note that when calculating $H_{n}(c_{k})$, it is important that the
sequence begins with $k=0$, as shifted sequences will result in related
but different Hankel determinants. For instance, for the shifted Bernoulli
numbers (see, e.g., \cite[Eq.~(3.57), p.~46]{K}),
\begin{align}
H_{n}(B_{k+1}) & =(-1)^{\binom{n+2}{2}}\frac{1}{2}\prod^{n}_{j=1}\frac{j!^{3}(j+1)!^{3}}{(2j+1)!(2j+2)!}\nonumber \\
 & =(-1)^{\binom{n+1}{2}}\left(-\frac{1}{2}\right)^{n+1}\prod^{n}_{\ell=1}\left(\left(\frac{\ell(\ell+1)}{2(2\ell+1)}\right)^{2}\right)^{n+1-\ell}.\label{eq:HnBk+1}
\end{align}

The calculation of $H_{n}(c_{k})$ mainly relies on the connection
to the orthogonal polynomials and continued fractions, with respect
to the given sequence $c_{k}$. Definition and more detailed results
will be introduced later in Section \ref{sec:Preliminaries}; here,
we note that, for instance, the corresponding monic orthogonal polynomials
with respect to $B_{k}$ are the continuous Hahn polynomials (see,
e.g., \cite[Thm.~8, p.~400]{JNT}). While for the shifted sequence
$B_{k+1}$, the first few corresponding monic orthogonal polynomials
are listed in Table \ref{tab:Q}. Some general results can be found
in, e.g., \cite{LeftShift} for shifts to the left, and \cite{DilcherJiu2}
for a shift to the right. 
\begin{table}
\begin{align*}
Q_{0}(y) & =1\\
Q_{1}(y) & =y+\frac{1}{3}\\
Q_{2}(y) & =y^{2}+\frac{3}{5}y+\frac{1}{5}\\
Q_{3}(y) & =y^{3}+\frac{6}{7}y^{2}+\frac{5}{7}y+\frac{6}{35}.
\end{align*}

\caption{\label{tab:Q} The Monic Orthogonal Polynomials $Q_{n}(y)$ with Respect
to $B_{k+1}$, for $0\le n\protect\leq3$. }
\end{table}

Evaluating Hankel determinants often presents a subtle challenge due
to the sensitivity to modifications of the underlying sequence. Simple
operations, such as passing from $a_{k}$ to $ka_{k}$ or $a_{k+1}/(k+1)$,
can drastically alter the corresponding Hankel determinants. For example,
in the collection \cite{DilcherJiu} of Hankel determinants of sequences
related to Bernoulli and Euler polynomials, such cases are independently
obtained. Therefore, in the current work, we focus on one missing
piece: the Hankel determinants of the sequence 
\[
\mu_{k}:=\frac{B_{k+1}}{k+1}.
\]
Introducing the linear denominator $1/(k+1)$, after shifting the
index, breaks the classic hypergeometric framework of continuous Hahn
polynomials associated with $B_{k}$. As a result, standard tools
fail to yield explicit determinant evaluations directly; instead,
we obtained two interesting and unexpected results. 

The first one is a general pattern, which applies to all sequences
$c_{n}$ satisfying $c_{0}\neq0$, and $c_{2m}=0$ for all $m\in\mathbb{N}$,
rather than merely to $\mu_{n}$. For example, if we concentrate on
the second highest coefficients in Table \ref{tab:Q}, the sequence
\[
\left(d_{\ell}\right)_{\ell\geq1}=\left(\frac{1}{3},\frac{3}{5},\frac{6}{7},\ldots\right)
\]
admits the formula 
\[
d_{\ell}=\frac{\ell(\ell+1)}{2(2\ell+1)},
\]
which appears in the product of \eqref{eq:HnBk+1}, not coincidentally;
moreover, certain recurrences formula exists, in this case. Note that
\[
Q_{3}(y)=\left(y+d_{3}-d_{2}\right)Q_{2}(y)+d^{2}_{2}Q_{1}(y).
\]
Details on the statement and proof will be presented in Subsection
\ref{subsec:GeneralParity}. Such a connection has been overlooked, partly because explicit expressions, e.g., \eqref{eq:HnBk+1},are often favored over partial patterns. 

The second result reveals that the generating function of $\mu_{k}$,
which admits a continued fraction expression by the classic results,
is exactly the same in early work of Cao \cite{Cao}, on the approximation
of the the \emph{Euler-Mascheroni constant} $\gamma$. To our surprise,
the two continued fractions are exactly the same; although they independently
originated from different goals and approaches. 

\textbf{Outline of this paper}. In Section \ref{sec:Preliminaries},
we will introduce necessary preliminaries, e.g., the deep connection
among Hankel determinants, orthogonal polynomials, and continued fractions,
that will be used in Section \ref{sec:Main-Results}. Then, we divide
our main results into three subsections. In Subsection \ref{subsec:GeneralParity},
we will state and prove the general result that, for certain sequences,
their Hankel determinants and orthogonal polynomials are determined
only by the second highest coefficients. To apply this result to our
main object $\mu_{k}=B_{k+1}/(k+1)$, the nondegeneracy condition
that $H_{n}(\mu_{k+r})\neq0$ for all $n=0,1,\ldots$ and $r=0,1$,
is proven in Subsection \ref{subsec:Nondegeneracy}. Finally in Subsection
\ref{subsec:ContinuedFractions}, we will show the continued fraction
expression of the generating function of $\mu_{k}$ is the same as
that established by Cao \cite{Cao}, to obtain a faster convergent
sequence to $\gamma$. 

\section{Preliminaries\label{sec:Preliminaries}}

The study of Hankel determinants has been extensively developed, mainly
based on the close relationship to classical orthogonal polynomials
and continued fractions; see, e.g., \cite{Chihara}, \cite{Ismail}
and \cite{K}. Here, we state some basic results for later use, in
order to make this paper self-contained. 

Suppose we are given a sequence $(c_{k})_{k\geq0}$ of numbers; then
we can define a linear functional $L$ on polynomials by, for $k=0,1,\ldots$,
\begin{equation}
L(y^{k})=c_{k}.\label{eq:LinearOperator}
\end{equation}

\begin{lem}
Let $L$ be the linear functional in \eqref{eq:LinearOperator}. If
(and only if) $H_{n}(c_{k})\neq0$ for all $n=0,1,2\ldots$, there
exists a unique sequence of monic polynomials $P_{n}(y)$ of degree
$n$ and a sequence of nonzero numbers $\zeta_{n}$ such that 
\[
L\left(P_{m}(y)P_{n}(y)\right)=\zeta_{n}\delta_{m,n}
\]
where $\delta_{m,n}$ denotes the Kronecker delta function that is equal to $1$ if $m=n$; and $0$ otherwise. Furthermore, for all $n\in\mathbb{N}$,
we have $\zeta_{n}=H_{n}(c_{k})/H_{n-1}(c_{k})$, and 
\begin{equation}
P_{n}(y)=\frac{1}{H_{n-1}(c_{k})}\det\begin{pmatrix}c_{0} & c_{1} & \cdots & c_{n}\\
c_{1} & c_{2} & \cdots & c_{n+1}\\
\vdots & \vdots & \ddots & \vdots\\
c_{n-1} & c_{n} & \cdots & c_{2n-1}\\
1 & y & \cdots & y^{n}
\end{pmatrix}.\label{eq:DEFPn}
\end{equation}
And the polynomials $P_{n}(y)$ satisfy the three-term recurrence
relation that $P_{0}(y)=1$, $P_{1}(y)=y+s_{0}$, and for $n\geq1$,
\begin{equation}
P_{n+1}(y)=(y+s_{n})P_{n}(y)+t_{n}P_{n-1}(y),\label{eq:3TermRec}
\end{equation}
for some sequences $(s_{n})_{n\geq0}$ and $(t_{n})_{n\geq1}$. 
\end{lem}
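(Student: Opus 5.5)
The plan is to reduce everything to linear algebra on the Gram (moment) matrix. For a monic polynomial $P_n(y)=y^n+\sum_{j<n}a_jy^j$, the orthogonality conditions $L(P_n(y)y^i)=0$ for $0\le i\le n-1$ form the linear system
\[
\sum_{j=0}^{n-1}c_{i+j}a_j=-c_{i+n},\qquad 0\le i\le n-1.
\]
Its coefficient matrix is the Hankel matrix $(c_{i+j})_{0\le i,j\le n-1}$, whose determinant is $H_{n-1}(c_k)$. Hence $P_n$ exists and is unique exactly when $H_{n-1}(c_k)\neq0$. Orthogonality to $y^0,\dots,y^{n-1}$ is equivalent to orthogonality to every $P_m$ with $m<n$, since $\{P_0,\dots,P_{n-1}\}$ spans the polynomials of degree below $n$. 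So, assuming all $H_n\neq0$, we obtain a unique sequence with $L(P_mP_n)=0$ for $m\neq n$.

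For the determinant formula \eqref{eq:DEFPn}, apply $L$ to the last row of the displayed determinant. By linearity in that row, $L(y^i\cdot\det)$ replaces the last row by $(c_i,\dots,c_{i+n})$. For $i<n$ this duplicates an earlier row and gives zero; for $i=n$ it gives $H_n(c_k)$. Expanding along the last row, the coefficient of $y^n$ is $H_{n-1}(c_k)$, so the normalized determinant is monic and orthogonal. By uniqueness it equals $P_n$. Moreover $\zeta_n=L(P_n^2)=L(P_n y^n)=H_n/H_{n-1}$, which is nonzero.

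For the converse ("only if"), suppose orthogonal monic $P_0,\dots,P_n$ exist with all $\zeta_k\neq0$. Let $U$ be the unit lower-triangular matrix expressing $P_k$ in the monomial basis. Then $U\,(c_{i+j})\,U^{T}=\mathrm{diag}(\zeta_0,\dots,\zeta_n)$, which gives
\[
H_n(c_k)=\prod_{k=0}^n\zeta_k\neq0.
\]
This same identity also recovers $\zeta_n=H_n/H_{n-1}$.

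For the three-term recurrence \eqref{eq:3TermRec}, note that $P_{n+1}-yP_n$ has degree at most $n$, so we can write $P_{n+1}-yP_n=\sum_{k\le n}\alpha_kP_k$. Then
\[
\alpha_k\zeta_k=L\bigl((P_{n+1}-yP_n)P_k\bigr)=-L(P_n\cdot yP_k).
\]
This vanishes when $k+1<n$, because $yP_k$ then has degree less than $n$. Only $k=n$ and $k=n-1$ survive, which defines $s_n$ and $t_n=-\zeta_n/\zeta_{n-1}$. Here we use $L(P_n\,yP_{n-1})=L(P_n^2)$, since $yP_{n-1}=P_n+{}$(lower degree terms). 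The case $P_1=y+s_0$ is immediate from monicity.

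The only delicate point is the "only if" direction, where one must be careful about what is assumed: existence together with nonzero $\zeta_n$. I would handle it through the triangular factorization above.
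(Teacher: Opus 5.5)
Your proof is correct and complete. The paper gives no proof of this lemma; it quotes it as a standard fact from Chihara, Ismail and Krattenthaler. Your argument is the standard textbook one: the Hankel linear system for existence and uniqueness, row replacement in the determinant, the factorization $U(c_{i+j})U^{T}=\mathrm{diag}(\zeta_0,\dots,\zeta_n)$ for the converse, and expanding $P_{n+1}-yP_n$ in the orthogonal basis for the recurrence. It also matches what the paper sketches just after the lemma (multiplying the determinant by $y^r$ and replacing $y^j$ by $c_j$), and your identities $H_n=\prod_{k=0}^n\zeta_k$ and $t_n=-\zeta_n/\zeta_{n-1}$ give the paper's next corollary $H_n(c_k)=c_0^{n+1}\prod_{\ell=1}^n(-t_\ell)^{n+1-\ell}$.
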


If we multiply both sides of \eqref{eq:DEFPn} by $y^{r}$ and replace
$y^{j}$ by $c_{j}$ , which includes replacing the constant term
$1$ by $c_{0}$, then the following corollary holds.
\begin{cor}
With the sequence $(c_{k})_{k\geq0}$ and the polynomials $P_{n}(y)$
in \eqref{eq:DEFPn}, we have 
\begin{equation}
y^{r}P_{n}(y)\bigg{|}_{y^k=c_k}=\begin{cases}
0, & 0\leq r\leq n-1;\\
\zeta_{n}, & r=n.
\end{cases}\label{eq:yrPnyEval}
\end{equation}
\end{cor}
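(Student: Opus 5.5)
The plan is to work directly with the determinant formula \eqref{eq:DEFPn} and use linearity of the evaluation map $y^{k}\mapsto c_{k}$, which is exactly the linear functional $L$ of \eqref{eq:LinearOperator}. Multiplying \eqref{eq:DEFPn} by $y^{r}$ only changes the last row, which becomes $(y^{r},y^{r+1},\ldots,y^{r+n})$; all other entries are constants. Expanding the determinant along this last row shows that $y^{r}P_{n}(y)$ is a linear combination of the monomials $y^{r+j}$, $0\le j\le n$, with coefficients equal to the corresponding cofactors divided by $H_{n-1}(c_{k})$. Applying $L$ term by term, which is the same as substituting $y^{r+j}\mapsto c_{r+j}$, therefore gives
\[
L\left(y^{r}P_{n}(y)\right)=\frac{1}{H_{n-1}(c_{k})}\det\begin{pmatrix}c_{0} & c_{1} & \cdots & c_{n}\\
\vdots & \vdots & \ddots & \vdots\\
c_{n-1} & c_{n} & \cdots & c_{2n-1}\\
c_{r} & c_{r+1} & \cdots & c_{r+n}
\end{pmatrix}.
\]
This is the only point needing care: when $r=0$ the constant entry $1$ in the last row must be replaced by $c_{0}$, as the remark before the corollary says. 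I would handle it by writing $1=y^{0}$ so that the substitution is uniform in $j$.

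The two cases then follow from the shape of this matrix. If $0\le r\le n-1$, the last row $(c_{r},\ldots,c_{r+n})$ is identical to the row with index $r$ of the upper block, so the determinant has two equal rows and vanishes. If $r=n$, the last row is $(c_{n},\ldots,c_{2n})$, and the matrix is exactly the Hankel matrix $(c_{i+j})_{0\le i,j\le n}$. Its determinant is $H_{n}(c_{k})$, so the value equals $H_{n}(c_{k})/H_{n-1}(c_{k})=\zeta_{n}$ by the preceding lemma.

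An alternative route for the case $r=n$ is to use orthogonality. Write $y^{n}=P_{n}(y)+(\text{lower-degree terms})$ and expand the lower-degree part in $P_{0},\ldots,P_{n-1}$; orthogonality then gives $L(y^{n}P_{n})=L(P_{n}^{2})=\zeta_{n}$. I expect no real obstacle in either argument; the one thing to make explicit is the treatment of the constant term described above.
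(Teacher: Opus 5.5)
Your proposal is correct and is essentially the paper's own argument. You multiply \eqref{eq:DEFPn} by $y^{r}$, substitute $y^{j}\mapsto c_{j}$ in the last row by linearity (including $1\mapsto c_{0}$), and read off either a repeated row (giving $0$) or the full Hankel matrix (giving $H_{n}(c_{k})/H_{n-1}(c_{k})=\zeta_{n}$); your orthogonality-based alternative for $r=n$ is also valid but not needed.
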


\begin{rem}
Since $P_{n}(y)$ are monic, the $n$ equations, obtained by letting $r$
run from $0$ to $n-1$ in \eqref{eq:yrPnyEval} determine $P_{n}(y)$.
This is equivalent to the definition \eqref{eq:DEFPn}. Now, we can
have the formula for the Hankel determinants of $c_{k}$. 
\end{rem}

\begin{cor}
With the sequence $(t_{n})_{n\geq1}$ as in \eqref{eq:3TermRec},
\[
H_{n}(c_{k})=c^{n+1}_{0}\prod^{n}_{\ell=1}(-t_{\ell})^{n+1-\ell}=(-1)^{\binom{n+1}{2}}c^{n+1}_{0}t^{n}_{1}t^{n-1}_{2}\cdots t_{n}.
\]
\end{cor}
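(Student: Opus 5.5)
The plan is to write $H_{n}(c_{k})$ as a telescoping product of the norms $\zeta_{\ell}$, and then to compute each $\zeta_{\ell}$ from the three-term recurrence \eqref{eq:3TermRec}. Throughout we assume, as in the preceding lemma, that $H_{n}(c_{k})\neq0$ for all $n$, so that the $P_{n}$, $\zeta_{n}$, $s_{n}$, $t_{n}$ all exist.

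\textbf{Step 1: telescoping.} Since $\zeta_{n}=H_{n}(c_{k})/H_{n-1}(c_{k})$ and $H_{0}(c_{k})=c_{0}=L(P_{0}^{2})=\zeta_{0}$, we get
\[
H_{n}(c_{k})=\zeta_{0}\zeta_{1}\cdots\zeta_{n}.
\]
It therefore suffices to show $\zeta_{\ell}=-t_{\ell}\zeta_{\ell-1}$ for $\ell\geq1$. Iterating this gives $\zeta_{\ell}=c_{0}(-t_{1})(-t_{2})\cdots(-t_{\ell})$.

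\textbf{Step 2: the recurrence for $\zeta_{\ell}$.} Apply the functional $L$ to $y^{\ell-1}P_{\ell+1}(y)$, using \eqref{eq:3TermRec} with $n=\ell$:
\[
0=L\bigl(y^{\ell-1}P_{\ell+1}\bigr)=L\bigl(y^{\ell}P_{\ell}\bigr)+s_{\ell}L\bigl(y^{\ell-1}P_{\ell}\bigr)+t_{\ell}L\bigl(y^{\ell-1}P_{\ell-1}\bigr).
\]
By \eqref{eq:yrPnyEval}:
\begin{itemize}
\item the left side vanishes because $\ell-1\leq\ell$;
\item the middle term vanishes;
\item the first term equals $\zeta_{\ell}$;
\item the last term equals $\zeta_{\ell-1}$.
\end{itemize}
Hence $\zeta_{\ell}=-t_{\ell}\zeta_{\ell-1}$, as required. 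The identity $L(y^{r}P_{n})=L(P_{r}P_{n})$ for $r\leq n$, which is what lets us read off $\zeta_{n}$ from \eqref{eq:yrPnyEval}, holds because $P_{r}-y^{r}$ has degree less than $r\leq n$ and is therefore annihilated against $P_{n}$.

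\textbf{Step 3: collecting exponents.} Multiplying the formulas from Step 1 gives
\[
H_{n}(c_{k})=\prod_{j=0}^{n}\zeta_{j}=c_{0}^{n+1}\prod_{j=0}^{n}\prod_{\ell=1}^{j}(-t_{\ell})=c_{0}^{n+1}\prod_{\ell=1}^{n}(-t_{\ell})^{n+1-\ell}.
\]
The sign is $(-1)^{\sum_{\ell}(n+1-\ell)}=(-1)^{\binom{n+1}{2}}$, which yields the second form of the statement.

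The only step that needs real care is Step 2, specifically tracking which evaluations in \eqref{eq:yrPnyEval} vanish. The base case $\ell=1$ also needs a check, since $P_{0}=1$ gives $L(y^{0}P_{0})=c_{0}=\zeta_{0}$, and this is consistent with the general argument.
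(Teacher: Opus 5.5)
Your proof is correct and follows the standard derivation that the paper leaves implicit: the corollary is stated as a direct consequence of the preceding lemma, with no separate proof. You telescope $H_{n}(c_{k})=\zeta_{0}\zeta_{1}\cdots\zeta_{n}$ via $\zeta_{n}=H_{n}(c_{k})/H_{n-1}(c_{k})$, and you obtain $\zeta_{\ell}=-t_{\ell}\zeta_{\ell-1}$ by applying $L$ to $y^{\ell-1}$ times \eqref{eq:3TermRec} and using \eqref{eq:yrPnyEval}; the vanishing terms, the base case $\zeta_{0}=c_{0}$, and the exponent count $\sum_{\ell=1}^{n}(n+1-\ell)=\binom{n+1}{2}$ are all handled correctly.
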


The next result shows the connection between the sequence $(c_{k})_{k\geq0}$
and a continued fraction expression, which is called the \emph{Jacobi
fractions}, or \emph{J-fractions}. Note that we require $H_{n}(c_{k})\neq0$,
which includes $c_{0}=H_{0}(c_{k})\neq0$. 
\begin{cor}
For a variable $x$, 
\begin{equation}
\sum^{\infty}_{k=0}c_{k}x^{k}=\frac{c_{0}}{1+s_{0}x+\frac{t_{1}x^{2}}{1+s_{1}x+\frac{t_{2}x^{2}}{1+s_{2}x+\ddots}}}=\frac{c_{0}}{1+s_{0}x+}\underset{j=1}{\stackrel{\infty}{\mathbf{K}}}\frac{t_{j}x^{2}}{1+s_{j}x},\label{eq:JCF}
\end{equation}
where we adopt the notation of the continued fractions 
\[
\underset{j=1}{\stackrel{\infty}{\mathbf{K}}}\frac{a_{j}}{b_{j}}=\frac{a_{1}}{b_{1}+\underset{j=2}{\stackrel{\infty}{\mathbf{K}}}\frac{a_{j}}{b_{j}}}=\frac{a_{1}}{b_{1}+}\underset{j=2}{\stackrel{\infty}{\mathbf{K}}}\frac{a_{j}}{b_{j}}=\frac{a_{1}}{b_{1}+\frac{a_{2}}{b_{2}+\ddots}},
\]
and its $n$th approximants 
\[
\underset{j=1}{\stackrel{n}{\mathbf{K}}}\frac{a_{j}}{b_{j}}=\frac{a_{1}}{b_{1}+\frac{a_{2}}{b_{2}+\ddots+\frac{a_{n}}{b_{n}}}}.
\]
\end{cor}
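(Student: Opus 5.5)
We want the J-fraction expansion \eqref{eq:JCF} of $\sum_{k\ge0}c_kx^k$. The natural route is through the moment functional $L$ and the three-term recurrence \eqref{eq:3TermRec}. The tool is the second-kind polynomials built from the same recurrence, which give a generating-function identity for the convergents. We work throughout in formal power series in $x$.

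\textbf{Step 1: reciprocal polynomials.} Let $\widetilde P_n(x)=x^nP_n(1/x)$. Then \eqref{eq:3TermRec} becomes
\[
\widetilde P_{n+1}(x)=(1+s_nx)\widetilde P_n(x)+t_nx^2\widetilde P_{n-1}(x),
\]
with $\widetilde P_0=1$ and $\widetilde P_1=1+s_0x$. Note that $\widetilde P_n(0)=1$. These polynomials are exactly the denominators $B_n$ of the approximants of the continued fraction in \eqref{eq:JCF}.

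\textbf{Step 2: numerators.} Define the associated polynomials
\[
Q_n(y)=L_u\!\left(\frac{P_n(y)-P_n(u)}{y-u}\right),
\]
where $L_u$ means $L$ acting on the variable $u$. Applying $L_u$ to the recurrence for $P_n(u)$ shows that the $Q_n$ satisfy the same recurrence, with $Q_0=0$ and $Q_1=c_0$. Hence $\widetilde Q_n(x):=x^{n-1}Q_n(1/x)$ satisfies the reciprocal recurrence of Step 1 with initial values $\widetilde Q_0=0$ and $\widetilde Q_1=c_0$. By the standard fundamental recurrence for continued fractions, $\widetilde Q_n/\widetilde P_n$ is therefore the $n$th approximant
\[
\frac{c_0}{1+s_0x+}\underset{j=1}{\stackrel{n-1}{\mathbf{K}}}\frac{t_jx^2}{1+s_jx}.
\]

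\textbf{Step 3: the key identity.} We show $\widetilde P_n(x)F(x)-\widetilde Q_n(x)=O(x^{2n})$, where $F(x)=\sum_k c_kx^k$. Write $F(x)=L_u\big(1/(1-ux)\big)$ formally. Then
\[
\widetilde P_n(x)F(x)-\widetilde Q_n(x)=x^{n}\,L_u\!\left(\frac{P_n(u)}{1-ux}\right)\cdot x^{-n}\cdot x^{n}\ \text{(up to normalization)}.
\]
More carefully, one checks that
\[
\widetilde P_nF-\widetilde Q_n=\sum_{k\ge0}x^{n+k}\,L(u^kP_n(u)).
\]
By the Corollary \eqref{eq:yrPnyEval}, the terms with $k\le n-1$ vanish. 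So the difference equals $\zeta_nx^{2n}+O(x^{2n+1})$.

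\textbf{Step 4: conclusion.} Since $\widetilde P_n(0)=1$, the series $\widetilde P_n$ is invertible, and $F-\widetilde Q_n/\widetilde P_n=O(x^{2n})$. Hence the $n$th approximant agrees with $F$ to order $2n$. As $n\to\infty$, the approximants converge formally (coefficientwise) to $F$, which is what \eqref{eq:JCF} asserts. The nondegeneracy $H_n\neq0$ is what guarantees the existence of the $P_n$ and of the recurrence coefficients; this is taken from the Lemma.

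The main obstacle is Step 3. The delicate point is the bookkeeping of the reciprocal powers of $x$ and the exact form of $\widetilde Q_n$. To handle it, first verify the identity for $n=1$ directly. Then prove it by induction using the shared recurrence: the error terms $R_n:=\widetilde P_nF-\widetilde Q_n$ satisfy the same three-term recurrence. So the order of vanishing can be tracked alongside the explicit formula $R_n=\sum_k x^{n+k}L(u^kP_n)$.
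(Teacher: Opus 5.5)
Your argument is correct. The paper gives no proof of this corollary. It records it in the preliminaries as a classical consequence of the orthogonal-polynomial lemma and refers to Chihara, Ismail and Krattenthaler, and what you wrote is essentially the standard proof from those sources. The reversed polynomials $\widetilde P_n(x)=x^nP_n(1/x)$ and the reversed second-kind polynomials $\widetilde Q_n(x)=x^{n-1}Q_n(1/x)$ satisfy the denominator and numerator recurrences of the fraction. Orthogonality then gives the contact identity $\widetilde P_nF-\widetilde Q_n=\sum_{k\ge0}x^{n+k}L(u^kP_n(u))=\zeta_nx^{2n}+O(x^{2n+1})$.

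Compared with citing the result, your route buys two things. First, it makes explicit that \eqref{eq:JCF} is an identity of formal power series, meaning coefficientwise convergence of the approximants. That is the only possible meaning in the paper's application, since $\sum_k\mu_kx^k$ has radius of convergence $0$. Second, it gives the quantitative statement that the $n$th approximant agrees with $F$ through $x^{2n-1}$. This is exactly what Subsection~\ref{subsec:ContinuedFractions} uses without comment when it asserts $H_n-\log n-\gamma-J_k(1/n)=O(n^{-2k-1})$.

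Three small cleanups:
\begin{enumerate}
\item The recurrence for $Q_n$ needs a justification you omitted. Writing $(y+s_n)P_n(y)-(u+s_n)P_n(u)=(y+s_n)(P_n(y)-P_n(u))+(y-u)P_n(u)$ produces an extra term $L_u(P_n(u))$. This term vanishes for $n\ge1$ by orthogonality, and you should say so.
\item The garbled first display in Step 3 should be replaced by the one-line computation $\widetilde Q_n(x)=x^nL_u\big((P_n(1/x)-P_n(u))/(1-ux)\big)$. Subtracting this from $\widetilde P_n(x)F(x)=x^nP_n(1/x)L_u\big(1/(1-ux)\big)$ gives your formula directly.
\item Your closing suggestion to track $R_n$ by induction is unnecessary, and it would not work on its own. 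The shared recurrence only yields $R_{n+1}=O(x^{2n})$ from the inductive hypotheses; the order $x^{2n}$ for $R_n$ comes from orthogonality via the explicit formula, not from the recurrence.
\end{enumerate}
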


The next lemma is about determinants of ``checkerboard matrices'',
namely matrices in which every other entry vanishes. This result can
be found in \cite[Lemmas 5 and 6]{Checker} and covers more general
matrices than just Hankel matrices.
\begin{lem}
\label{lem:Checker}Let $M=(M_{i,j})_{0\leq i,j\leq n-1}$ be a matrix. 
\begin{enumerate}
\item If $M_{i,j}=0$ whenever $i+j$ is odd, then
\begin{equation}
\det_{0\leq i,j\leq n-1}(M_{i,j})=\det_{0\leq i,j\leq\lfloor\frac{n-1}{2}\rfloor}(M_{2i,2j})\cdot\det_{0\leq i,j\leq\lfloor\frac{n-2}{2}\rfloor}(M_{2i+1,2j+1}).\label{eq:CheckerOdd0}
\end{equation}
\item If $M_{i,j}=0$ whenever $i+j$ is even, then
\begin{enumerate}
\item for an even $n$, 
\begin{equation}
\det_{0\leq i,j\leq n-1}(M_{i,j})=(-1)^{\frac{n}{2}}\det_{0\leq i,j\leq\lfloor\frac{n-1}{2}\rfloor}(M_{2i+1,2j})\cdot\det_{0\leq i,j\leq\lfloor\frac{n-2}{2}\rfloor}(M_{2i,2j+1});\label{eq:CheckerEven01}
\end{equation}
\item while for an odd $n$ 
\begin{equation}
\det_{0\leq i,j\leq n-1}(M_{i,j})=0.\label{eq:CheckerEven02}
\end{equation}
\end{enumerate}
\end{enumerate}
\end{lem}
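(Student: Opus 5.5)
The approach is to permute rows and columns into two blocks, one indexed by even indices and one by odd indices, so that the matrix becomes block-structured. Let $\sigma$ be the permutation of $\{0,1,\ldots,n-1\}$ listing first the even indices $0,2,4,\ldots$ in increasing order and then the odd indices $1,3,\ldots$. Apply $\sigma$ to the rows and, separately, a suitable permutation $\tau$ to the columns. The determinant changes by $\operatorname{sgn}(\sigma)\operatorname{sgn}(\tau)$, and the two blocks then have sizes $p=\lfloor\frac{n+1}{2}\rfloor$ (even indices) and $q=\lfloor\frac{n}{2}\rfloor$ (odd indices).

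For part (1), take $\tau=\sigma$. Since $M_{i,j}=0$ when $i+j$ is odd, the entries pairing an even row with an odd column, or an odd row with an even column, vanish. The permuted matrix $\sigma M\sigma^{-1}$ is therefore block diagonal $\operatorname{diag}\bigl((M_{2i,2j}),(M_{2i+1,2j+1})\bigr)$. A simultaneous row and column permutation by the same $\sigma$ has sign $\operatorname{sgn}(\sigma)^2=1$, so \eqref{eq:CheckerOdd0} follows directly. The block sizes are $\lfloor\frac{n-1}{2}\rfloor+1$ and $\lfloor\frac{n-2}{2}\rfloor+1$, as stated.

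For part (2), rows are still reordered by $\sigma$, so rows come as evens then odds, but columns are reordered as odds then evens. Now the nonzero entries occur only when $i+j$ is odd. The permuted matrix is block diagonal with blocks $(M_{2i,2j+1})$, of size $p\times q$, and $(M_{2i+1,2j})$, of size $q\times p$.

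If $n$ is odd, then $p=q+1\neq q$. The first $p$ rows are supported on only $q<p$ columns, so they are linearly dependent and the determinant is $0$. This gives \eqref{eq:CheckerEven02}, and it can equally be seen by counting: any permutation term must map the $p$ even rows into the $q$ odd columns, which is impossible.

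If $n=2m$ is even, then $p=q=m$ and the determinant equals $\pm\det(M_{2i,2j+1})\det(M_{2i+1,2j})$. The sign is the sign of the permutation relating the column order "odds then evens" to the row order "evens then odds". Equivalently, it is the sign of swapping the two column blocks of size $m$ after applying $\sigma$ to both rows and columns. The simultaneous part contributes $+1$, and swapping two blocks of $m$ columns contributes $(-1)^{m\cdot m}=(-1)^m=(-1)^{n/2}$. This gives \eqref{eq:CheckerEven01}.

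The only delicate step is this sign bookkeeping in case (2a). I would verify it by observing that the block anti-diagonal matrix $\begin{pmatrix}0&A\\ B&0\end{pmatrix}$ with $m\times m$ blocks has determinant $(-1)^{m^2}\det A\det B$, and then checking that the block labelling matches the statement. There, $A$ collects even rows and odd columns, and $B$ collects odd rows and even columns; since the product is commutative, the order of the two factors does not matter.
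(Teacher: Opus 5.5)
Your proof is correct. The paper gives no proof of its own; it cites \cite[Lemmas 5 and 6]{Checker} and only illustrates the lemma with examples. Your parity-reordering argument is the standard proof of this fact:
\begin{enumerate}
\item in part (1), a simultaneous permutation (sign $+1$) makes the matrix block diagonal;
\item in part (2a), the matrix becomes the block anti-diagonal form $\begin{pmatrix}0&A\\B&0\end{pmatrix}$ with $m\times m$ blocks, giving the sign $(-1)^{m^2}=(-1)^{n/2}$;
\item in part (2b), the $m+1$ even-indexed rows are supported on only $m$ odd-indexed columns, so the determinant vanishes.
\end{enumerate}
Your signs agree with the paper's worked examples, e.g.\ $\det\begin{pmatrix}0&d\\a&0\end{pmatrix}=-ad$.
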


\begin{rem}
The \emph{floor function} $\lfloor\cdot\rfloor$ is used in the previous
lemma. We also note that \eqref{eq:CheckerOdd0}---\eqref{eq:CheckerEven02}
are best explained through some examples. 
\end{rem}

\begin{example}
By \eqref{eq:CheckerOdd0}, we have
\[
\det\begin{pmatrix}a & 0 & b & 0 & c\\
0 & \boldsymbol{d} & 0 & \boldsymbol{e} & 0\\
f & 0 & g & 0 & h\\
0 & \boldsymbol{i} & 0 & \boldsymbol{j} & 0\\
k & 0 & \ell & 0 & m
\end{pmatrix}=\det\begin{pmatrix}a & b & c\\
f & g & h\\
k & \ell & m
\end{pmatrix}\cdot\det\begin{pmatrix}\boldsymbol{d} & \boldsymbol{e}\\
\boldsymbol{i} & \boldsymbol{j}
\end{pmatrix},
\]
and 
\[
\det\begin{pmatrix}a & 0 & b & 0\\
0 & \boldsymbol{d} & 0 & \boldsymbol{e}\\
f & 0 & g & 0\\
0 & \boldsymbol{i} & 0 & \boldsymbol{j}
\end{pmatrix}=\det\begin{pmatrix}a & b\\
f & g
\end{pmatrix}\cdot\det\begin{pmatrix}\boldsymbol{d} & \boldsymbol{e}\\
\boldsymbol{i} & \boldsymbol{j}
\end{pmatrix}.
\]
Also, by \eqref{eq:CheckerEven01} and \eqref{eq:CheckerEven02},
we see
\[
\det\begin{pmatrix}0 & \boldsymbol{d} & 0 & \boldsymbol{e}\\
f & 0 & g & 0\\
0 & \boldsymbol{i} & 0 & \boldsymbol{j}\\
k & 0 & \ell & 0
\end{pmatrix}=\det\begin{pmatrix}f & g\\
k & \ell
\end{pmatrix}\cdot\det\begin{pmatrix}\boldsymbol{d} & \boldsymbol{e}\\
\boldsymbol{i} & \boldsymbol{j}
\end{pmatrix},
\]
and 
\[
\det\begin{pmatrix}0 & \boldsymbol{d}\\
a & 0
\end{pmatrix}=-a\cdot\boldsymbol{d},\quad\det\begin{pmatrix}0 & \boldsymbol{d} & 0\\
f & 0 & g\\
0 & \boldsymbol{i} & 0
\end{pmatrix}=0.
\]
\end{example}

The following classic results of the sequence with all odd terms $0$,
i.e., being \emph{symmetric}, can be found in, e.g., \cite[Thm.~4.3, p.~21]{Chihara}.
One can alternatively show it by combining \eqref{eq:DEFPn} and Lem.~\ref{lem:Checker}. 
\begin{thm}
If $(c_{k})_{k\geq0}$ has all odd-indexed terms zero, i.e., $c_{2m-1}=0$
for all $m\in\mathbb{N}$, then, its corresponding monic orthogonal
polynomials with recurrence in \eqref{eq:3TermRec} satisfy $P_{n}(-y)=(-1)^{n}P_{n}(y)$
and $s_{n}=0$. 
\end{thm}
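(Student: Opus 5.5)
The plan is to prove the parity statement $P_{n}(-y)=(-1)^{n}P_{n}(y)$ by combining the determinant formula \eqref{eq:DEFPn} with Lem.~\ref{lem:Checker}, or equivalently by a uniqueness argument. The relation $s_{n}=0$ will then be read off from the three-term recurrence \eqref{eq:3TermRec}. Throughout we assume the standing nondegeneracy hypothesis $H_{n}(c_{k})\neq0$, so that the $P_{n}$ exist and are unique.

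\textbf{Parity via uniqueness.} Define $\tilde P_{n}(y):=(-1)^{n}P_{n}(-y)$, which is again monic of degree $n$. For $0\le r\le n-1$ we have
\[
L\bigl(y^{r}\tilde P_{n}(y)\bigr)=(-1)^{n+r}\,\tilde L\bigl(y^{r}P_{n}(y)\bigr),
\]
where $\tilde L$ is the functional with $\tilde L(y^{k})=(-1)^{k}c_{k}$. Since $c_{k}=0$ for every odd $k$, we get $(-1)^{k}c_{k}=c_{k}$ for all $k$, so $\tilde L=L$. By \eqref{eq:yrPnyEval} the right-hand side therefore vanishes for $0\le r\le n-1$. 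By the Remark, these $n$ conditions together with monicity determine $P_{n}$ uniquely, hence $\tilde P_{n}=P_{n}$. This is exactly the claimed parity.

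\textbf{Checkerboard alternative.} One can also see the parity directly from the determinant. In \eqref{eq:DEFPn}, the coefficient of $y^{j}$ is, up to sign, the minor obtained by deleting the last row and column $j$ of the $(n+1)\times(n+1)$ matrix. The remaining $n\times n$ matrix has the form $(c_{i+\sigma(j')})$, where column index $j'$ is shifted by one past the deleted column $j$. When $j\not\equiv n \pmod 2$, this minor should vanish by the odd-size case \eqref{eq:CheckerEven02} after reindexing. However, tracking this reindexing is fiddly, so I would present the uniqueness argument as the main proof and mention this route only as a remark.

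\textbf{Vanishing of $s_{n}$.} Substitute $y\mapsto -y$ in \eqref{eq:3TermRec}, multiply by $(-1)^{n+1}$, and apply the parity just proved. This gives
\[
P_{n+1}(y)=(y-s_{n})P_{n}(y)+t_{n}P_{n-1}(y).
\]
Subtracting the original recurrence leaves $2s_{n}P_{n}(y)=0$, so $s_{n}=0$ for $n\ge1$, working over characteristic not $2$ (e.g.\ $\mathbb{Q}$). For $n=0$, we have $P_{1}(y)=y+s_{0}$, and parity forces $P_{1}$ to be odd, so $s_{0}=0$. Alternatively, $s_{0}=-c_{1}/c_{0}=0$ directly.

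\textbf{Main obstacle.} The only delicate point is justifying that $\tilde L=L$ and invoking uniqueness correctly, which requires the standing nondegeneracy hypothesis. Everything else is routine.
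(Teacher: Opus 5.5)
Your main argument is correct and complete under the standing assumption $H_n(c_k)\neq0$. The reflected polynomial $(-1)^nP_n(-y)$ is monic. Since $(-1)^kc_k=c_k$ for every $k$, it satisfies the same $n$ conditions \eqref{eq:yrPnyEval}, so uniqueness (which needs only $H_{n-1}(c_k)\neq0$) forces the parity. Substituting $y\mapsto -y$ in \eqref{eq:3TermRec} then gives $2s_nP_n=0$.

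The paper gives no proof of its own. It cites Chihara, where the standard proof is essentially your reflection-plus-uniqueness argument. As an alternative it suggests combining \eqref{eq:DEFPn} with Lemma~\ref{lem:Checker}, which is the route you relegate to a remark. Your route is shorter and avoids all index bookkeeping. Its only cost, which you already flagged, is characteristic $\neq 2$: both the passage from $P_n(-y)=(-1)^nP_n(y)$ to ``no monomials $y^j$ with $j\not\equiv n \pmod 2$'' and the cancellation in $2s_n=0$ need it. A determinant argument instead shows that those coefficients vanish as polynomial identities in the $c_k$, valid over any field. This is irrelevant for the rational sequences of this paper.

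If you keep the checkerboard remark, it needs correcting. After deleting the last row and column $j$ of the matrix in \eqref{eq:DEFPn}, the $n\times n$ minor is in general not a checkerboard matrix, because the parity pattern shifts by one at the deleted column. Moreover, when $n$ is even the minor has even size, so \eqref{eq:CheckerEven02} cannot be what kills it. For example, $n=2$, $j=1$ gives $\det\begin{pmatrix}c_0&c_2\\0&0\end{pmatrix}$. The correct mechanism is a counting one: when $j\not\equiv n \pmod 2$, the rows of one parity class outnumber the remaining columns of matching parity, and they are supported only there. Those rows are therefore linearly dependent and the minor vanishes.

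A cleaner determinant proof avoids minors altogether. Write $W(y)$ for the matrix in \eqref{eq:DEFPn}, and set $E=\mathrm{diag}(1,-1,\ldots,(-1)^{n-1},1)$ and $F=\mathrm{diag}(1,-1,\ldots,(-1)^{n})$. Because $(-1)^{i+j}c_{i+j}=c_{i+j}$, one has $W(-y)=E\,W(y)\,F$. Hence $\det W(-y)=(-1)^{n(n-1)/2+n(n+1)/2}\det W(y)=(-1)^{n}\det W(y)$.
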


Finally, from the exponential generating function (and the definition) \eqref{eq:DEFBernoulli} of the Bernoulli numbers $B_{k}$, we have one important property: 
$\mu_{k+1}$ (as well as $B_{k+2}$) are symmetric. 
\begin{thm}
$B_{2n+1}=0$ for all $n\in\mathbb{N}$, and $B_{1}=-1/2$ is the
only nonzero, odd-indexed Bernoulli number. 
\end{thm}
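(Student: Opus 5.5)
The plan is to work directly from the exponential generating function \eqref{eq:DEFBernoulli} and show that
\[
f(t):=\frac{t}{e^{t}-1}+\frac{t}{2}
\]
is an even function of $t$. Once that is done, every odd-indexed coefficient of $f$ vanishes. The Taylor coefficients of $f$ agree with those of $t/(e^{t}-1)$ except at $t^{1}$, where $t/2$ adds $\frac{1}{2}$. So this gives $B_{2n+1}=0$ for all $n\geq1$, and leaves only $B_{1}$ to compute.

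The key step is the evenness, which I will check by a direct algebraic manipulation. Combining the two terms gives
\[
f(t)=\frac{t}{2}\cdot\frac{2+e^{t}-1}{e^{t}-1}=\frac{t}{2}\cdot\frac{e^{t}+1}{e^{t}-1}.
\]
Multiplying numerator and denominator by $e^{-t/2}$ turns this into
\[
f(t)=\frac{t}{2}\cdot\frac{e^{t/2}+e^{-t/2}}{e^{t/2}-e^{-t/2}}=\frac{t}{2}\coth\frac{t}{2}.
\]
This is a product of two odd functions, hence even. Alternatively, one can verify $f(-t)=f(t)$ directly using $\frac{-t}{e^{-t}-1}=\frac{te^{t}}{e^{t}-1}=t+\frac{t}{e^{t}-1}$.

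Comparing coefficients then finishes the argument. The coefficient of $t$ in $f$ is $B_{1}+\frac{1}{2}$, and it must vanish, so $B_{1}=-\frac{1}{2}\neq0$. For $n\geq1$, the coefficient of $t^{2n+1}$ is $B_{2n+1}/(2n+1)!$, which also vanishes. I expect no real obstacle. The only points needing care are the bookkeeping at index $1$ and noting that the power series makes sense near $t=0$: the singularity of $t/(e^{t}-1)$ at $t=0$ is removable, with value $1$ there, so coefficient comparison is legitimate.
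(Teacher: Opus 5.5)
Your proof is correct and takes essentially the same route as the paper. The paper gives no written proof and only attributes the statement to the generating function \eqref{eq:DEFBernoulli}; your computation that $\frac{t}{e^{t}-1}+\frac{t}{2}=\frac{t}{2}\coth\frac{t}{2}$ is even supplies exactly the argument left implicit there, with the index-$1$ bookkeeping handled correctly.
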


\section{Main Results\label{sec:Main-Results}}

\subsection{General Results on Hankel Determinants of Special Sequences\label{subsec:GeneralParity}}

In this subsection, we consider a general sequence $(c_{k})_{k\geq0}$
with the following properties: 
\begin{enumerate}
\item $c_{0}\neq0$, and $c_{2m}=0$ for all $m\in\mathbb{N}$ (namely, 
the shifted sequence $(c_{k+1})_{k\geq0}$ is symmetric); 
\item and the shifted Hankel determinants of $c_{k+1}$ are nonzero, i.e.,
for all $n=0,1,2\ldots,$ $H_{n}(c_{k+1})\neq0$. 
\end{enumerate}
We further let $P_{n}(y)$ be the corresponding monic orthogonal polynomials
with respect to $c_{k}$, whose three-term recurrence is given in
\eqref{eq:3TermRec}. Consider the expansion of $P_{n}(y)$ as
\[
P_{n}(y)=y^{n}+d_{n}y^{n-1}+\text{lower terms}.
\]
It is generally true, regardless of the properties of $c_{k}$, that
\begin{equation}
s_{n}=d_{n+1}-d_{n},\label{eq:sndn}
\end{equation}
by matching coefficients of $y^{n}$ on both sides of \eqref{eq:3TermRec}.
Now, with the properties mentioned above, we can further express $t_{n}$
in terms of $d_{n}$. 
\begin{thm}
\label{thm:tndn}We have $t_{n}=d^{2}_{n}$. 
\end{thm}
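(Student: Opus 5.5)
The plan is to compare $P_{n}$ with the monic orthogonal polynomials of the shifted, symmetric sequence $(c_{k+1})_{k\geq0}$, and then read $t_{n}$ off the three-term recurrence \eqref{eq:3TermRec}.

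Let $L'(p):=L(yp(y))$, so that $L'(y^{k})=c_{k+1}$. By hypothesis (2), $L'$ has monic orthogonal polynomials $R_{n}(y)$. Since $(c_{k+1})$ is symmetric, the classical theorem on symmetric sequences gives $R_{n}(-y)=(-1)^{n}R_{n}(y)$ and
\[
R_{n+1}(y)=yR_{n}(y)-u_{n}R_{n-1}(y),\qquad u_{n}\neq0 .
\]
Here $u_{n}\neq0$ because the Hankel determinants $H_{n}(c_{k+1})$ are nonzero. Parity gives two facts we will use: $R_{2m}(0)=\prod(-u_{j})\neq0$, and $R_{n}$ has no $y^{n-1}$ term.

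\emph{Step 1: a two-term expansion of $P_{n}$.} For $1\le r\le n-1$ we have $L'(y^{r-1}P_{n})=L(y^{r}P_{n})=0$ by \eqref{eq:yrPnyEval}. So $P_{n}$ is $L'$-orthogonal to all polynomials of degree at most $n-2$. Since $P_{n}$ is monic of degree $n$, this forces
\[
P_{n}=R_{n}+\alpha_{n}R_{n-1}
\]
for some constant $\alpha_{n}$. Comparing $y^{n-1}$-coefficients, and using that $R_{n}$ has no $y^{n-1}$ term, gives $\alpha_{n}=d_{n}$.

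\emph{Step 2: read off the recurrence.} Substitute this expansion into \eqref{eq:3TermRec}. Rewrite $yR_{n}$ and $yR_{n-1}$ using the recurrence for the $R_{j}$, and compare coefficients in the basis $\{R_{j}\}$:
\begin{itemize}
\item the $R_{n}$ coefficient reproduces \eqref{eq:sndn};
\item the $R_{n-1}$ coefficient gives $t_{n}=-u_{n}-d_{n}(d_{n+1}-d_{n})$;
\item the $R_{n-2}$ coefficient gives $t_{n}d_{n-1}=-d_{n}u_{n-1}$.
\end{itemize}
The claim $t_{n}=d_{n}^{2}$ then follows by induction on $n$ from the auxiliary identity $u_{n}=-d_{n}d_{n+1}$. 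Indeed, if $t_{n}=d_{n}^{2}$, the $R_{n-1}$ relation gives $u_{n}=-d_{n}d_{n+1}$. Then the $R_{n-2}$ relation at index $n+1$ reads $t_{n+1}d_{n}=d_{n+1}^{2}d_{n}$, which gives $t_{n+1}=d_{n+1}^{2}$ provided $d_{n}\neq0$.

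\emph{Base case.} From $L(P_{1})=0$ we get $d_{1}=-c_{1}/c_{0}$. We also have $R_{1}=y$ and $R_{2}=y^{2}-u_{1}$. Hypothesis (1), $c_{2}=0$, gives $L(R_{2})=-u_{1}c_{0}$. The condition $L(P_{2})=L(R_{2})+d_{2}L(R_{1})=0$ then yields $u_{1}=d_{2}c_{1}/c_{0}=-d_{1}d_{2}$. Feeding this into the $R_{n-1}$ relation at $n=1$ gives $t_{1}=d_{1}^{2}$.

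\emph{Main obstacle: showing $d_{n}\neq0$ for all $n$.} This is the step I expect to need the most care. Suppose $d_{n}=0$. The $R_{n-2}$ relation at index $n+1$ gives $d_{n+1}u_{n}=0$, and since $u_{n}\neq0$, also $d_{n+1}=0$. Hence $P_{n}=R_{n}$ and $P_{n+1}=R_{n+1}$, and so $L(R_{n})=L(R_{n+1})=0$. One of $n$, $n+1$ is even, say $m$. For an even polynomial, $L$ only sees the constant term, because $c_{2k}=0$ for $k\ge1$. Therefore $L(R_{m})=c_{0}R_{m}(0)$, which is nonzero since $c_{0}\neq0$ and $R_{m}(0)\neq0$. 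This contradiction shows $d_{n}\neq0$ and completes the induction.
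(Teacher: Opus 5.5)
Your proof is correct, but it runs in the opposite direction from the paper's. The paper uses the kernel-polynomial (Christoffel) side. It sets $\lambda_n=P_{n+1}(0)/P_n(0)$, which is nonzero because \eqref{eq:DEFPn} gives $P_n(0)=(-1)^nH_{n-1}(c_{k+1})/H_{n-1}(c_k)$. It then shows that $(P_{n+1}-\lambda_nP_n)/y$ is the monic orthogonal polynomial of degree $n$ for $(c_{k+1})$, which is exactly your $R_n$. Parity kills the $y^{n-1}$ coefficient of this polynomial, so $\lambda_n=d_{n+1}$. Evaluating \eqref{eq:3TermRec} at $y=0$ and dividing by $P_n(0)$ then gives $d_{n+1}=(d_{n+1}-d_n)+t_n/d_n$ in one line. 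That argument needs no induction and no separate base case, and $d_n\neq0$ comes for free from $P_n(0)\neq0$.

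You instead use the inverse (Geronimus-type) expansion $P_n=R_n+d_nR_{n-1}$. This commits you to several extra steps: comparing coefficients in the $R$-basis, running an induction through the auxiliary identity $u_n=-d_nd_{n+1}$, treating $n=1$ by hand, and proving $d_n\neq0$ separately from parity and $c_{2k}=0$. All of these steps check out. In particular the nonvanishing argument is not circular, since the $R_{n-2}$ relation $t_{n+1}d_n=-d_{n+1}u_n$ holds without any inductive hypothesis.

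What your route buys is twofold. It never needs the determinant formula for $P_n(0)$, and it exhibits explicitly the recurrence coefficients $u_n=-d_nd_{n+1}$ of the shifted symmetric family, hence also $H_n(c_{k+1})$ in terms of the $d_\ell$. The paper's route yields the same information with one more line. Combining its identity $yR_n=P_{n+1}-d_{n+1}P_n$ with your expansion gives $yR_n=R_{n+1}-d_nd_{n+1}R_{n-1}$ immediately, and $P_n(0)=d_1d_2\cdots d_n$. So for the theorem itself the paper's route is the cheaper one.
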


\begin{proof}
Define the sequence $\lambda_{n}:=P_{n+1}(0)/P_{n}(0)$. Note that
by \eqref{eq:DEFPn}, 
\[
P_{n}(0)=(-1)^{n}\frac{H_{n-1}(c_{k+1})}{H_{n-1}(c_{k})}\neq0,
\]
so that $\lambda_{n}\neq0$ is well-defined. Then consider 
\[
\tilde{P}_{n}(y):=\frac{P_{n+1}(y)-\lambda_{n}P_{n}(y)}{y},
\]
which is also a polynomial in $y$ of degree $n$, by the definition
of $\lambda_{n}$. Now we claim that $\tilde{P}_{n}(y)$ is the orthogonal
polynomial with respect to the shifted sequence $c_{k+1}$. By \eqref{eq:yrPnyEval},
it suffices to show that for any $0\leq r\leq n-1$, 
\[
\tilde{P}_{n}(y)y^{r}\bigg|_{y^{k}=c_{k+1}}=0.
\]
A direct calculation shows that 
\begin{align*}
\tilde{P}_{n}(y)y^{r}\bigg|_{y^{k}=c_{k+1}} & =\tilde{P}_{n}(y)y^{r+1}\bigg|_{y^{k}=c_{k}}=\left(P_{n+1}(y)-\lambda_{n}P_{n}(y)\right)y^{r}\bigg|_{y^{k}=c_{k}}\\
 & =P_{n+1}(y)y^{r}\bigg|_{y^{k}=c_{k}}-\lambda_{n}P_{n}(y)y^{r}\bigg|_{y^{k}=c_{k}}=0,
\end{align*}
where in the last two steps, we applied the linearity. Note that for
the shifted sequence $\tilde{c}_{k}=c_{k+1}$, it is symmetric. Hence,
$\tilde{P}_{n}(y)$ is either odd or even, depending on the parity
of $n$. In particular, the coefficient of $y^{n-1}$ must be $0$.
This, by definition, yields 
\begin{equation}
0=d_{n+1}-\lambda_{n}=d_{n+1}-\frac{P_{n+1}(0)}{P_{n}(0)}.\label{eq:dnlambdan}
\end{equation}
Let $y=0$ in \eqref{eq:3TermRec} to see 
\[
P_{n+1}(0)=s_{n}P_{n}(0)+t_{n}P_{n-1}(0),
\]
which, by applying \eqref{eq:sndn} and \eqref{eq:dnlambdan}, implies
\[
d_{n+1}=(d_{n+1}-d_{n})+\frac{t_{n}}{d_{n}},
\]
so that $t_{n}=d^{2}_{n}$. 
\end{proof}

\begin{cor}
Recall that $s_{n}=d_{n+1}-d_{n}$, so $P_{n}(y)$ are completely
determined by the sequence $d_{n}$ via the recurrence 
\[
P_{n+1}(y)=\left(y+d_{n+1}-d_{n}\right)P_{n}(y)+d^{2}_{n}P_{n-1}(y).
\]
\end{cor}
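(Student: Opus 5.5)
The corollary follows by substituting the two identities already in hand into the three-term recurrence \eqref{eq:3TermRec}. Matching the coefficients of $y^{n}$ on both sides of \eqref{eq:3TermRec} gives \eqref{eq:sndn}, namely $s_{n}=d_{n+1}-d_{n}$. Theorem~\ref{thm:tndn} gives $t_{n}=d_{n}^{2}$ for every $n\geq1$. Replacing $s_{n}$ and $t_{n}$ in \eqref{eq:3TermRec} by these expressions yields exactly
\[
P_{n+1}(y)=\left(y+d_{n+1}-d_{n}\right)P_{n}(y)+d^{2}_{n}P_{n-1}(y),\qquad n\geq1.
\]

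For the claim that the $P_{n}$ are \emph{completely determined} by $(d_{n})$, I will argue by induction. We have $P_{0}=1$ and $P_{1}(y)=y+s_{0}=y+d_{1}$, since $d_{0}=0$ ($P_{0}$ has no $y^{-1}$ term). The displayed recurrence then expresses $P_{n+1}$ through $P_{n}$, $P_{n-1}$, $d_{n}$ and $d_{n+1}$ only. Hence every $P_{n}$, and with it every $s_{n}$ and $t_{n}$, is a function of the sequence $d_{1},d_{2},\ldots$.

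One side remark is worth recording because it is immediate from Theorem~\ref{thm:tndn} and the corollary giving $H_{n}$ in terms of $t_{\ell}$. Substituting $t_{\ell}=d_{\ell}^{2}$ there gives
\[
H_{n}(c_{k})=(-1)^{\binom{n+1}{2}}c_{0}^{n+1}\prod_{\ell=1}^{n}d_{\ell}^{2(n+1-\ell)}.
\]
This shows that the Hankel determinants are governed by the second-highest coefficients, as the introduction announced.

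There is no genuine obstacle. The only points to check are the indexing at $n=0$ (using $d_{0}=0$) and that Theorem~\ref{thm:tndn} applies for all $n\geq1$. The latter requires $d_{n}\neq0$, which holds because $d_{n}=\lambda_{n-1}=P_{n}(0)/P_{n-1}(0)\neq0$ by the nondegeneracy hypothesis on $H_{n}(c_{k+1})$.
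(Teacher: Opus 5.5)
Your proposal is correct and matches the paper, which treats this corollary as the immediate substitution of $s_n=d_{n+1}-d_n$ from \eqref{eq:sndn} and $t_n=d_n^2$ from Theorem~\ref{thm:tndn} into \eqref{eq:3TermRec}. Your extra checks are accurate bookkeeping: $d_0=0$ gives $P_1(y)=y+d_1$, and $d_n=\lambda_{n-1}\neq 0$. Your side formula for $H_n(c_k)$ is also right, and it is exactly what the paper later specializes to $\mu_k$.
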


\begin{example}
We can construct certain sequences from the sequence $d_{n}$, by
first letting $s_{n}=d_{n+1}-d_{n}$ and $t_{n}=d^{2}_{n}$. Then, the polynomials
$P_{n}(y)$ and the corresponding sequence $c_{k}$ follow naturally. 
\begin{enumerate}
\item If we let $d_{n}=1$ except for $d_{0}=0$, then $s_{0}=1$ but $s_{n}=0$
for all $n\in\mathbb{N}$. We will have the sequence 
\[
(c_{k})_{k\geq0}=(1,-1,0,1,0,-2,0,5,0,-14,0,42,\ldots).
\]
Namely, $c_{0}=1$ and, for all $m\in\mathbb{N}$, $c_{2m}=0$ and 
\[
c_{2m-1}=(-1)^{m}C_{m}=\frac{(-1)^{m}}{m+1}\binom{2m}{m},
\]
where $C_{m}$ is the $m$-th Catalan number. 
\item Another choice is $d_{n}=n$, so that $s_{n}=1$ and $t_{n}=n^{2}$.
It is not hard to check, in this case 
\[
c_{2m-1}=\frac{2^{2m}(1-2^{2m})B_{2m}}{2m}.
\]
\end{enumerate}
\end{example}

\subsection{Nondegeneracy of $\mu_{n}$\label{subsec:Nondegeneracy}}

By the integral representation (, see, e.g., \cite[Entry 24.7.2]{DLMF})
that 
\[
B_{2k}=(-1)^{k+1}4k\int^{\infty}_{0}\frac{x^{2k-1}}{e^{2\pi x}-1}dx,
\]
we define, for $k=0,1,2,\ldots,$
\[
\beta_{k}:=(-1)^{k}\frac{B_{2k+2}}{2k+2}=2\int^{\infty}_{0}\frac{x^{2k+1}}{e^{2\pi x}-1}dx.
\]
The change of variables $t=x^2$ leads to 
\[
\beta_{k}=\int^{\infty}_{0}t^{k}d\rho(t),
\]
where 
\[
d\rho(t)=\frac{dt}{e^{2\pi\sqrt{t}}-1}.
\]
Or, we alternatively obtain 
\[
\rho(t)=\frac{\sqrt{t}\log\left(1-e^{-2\pi\sqrt{t}}\right)}{\pi}-\frac{1}{2\pi^{2}}\mathrm{Li}_{2}\left(e^{-2\pi\sqrt{t}}\right),
\]
where the \emph{dilogarithm function} $\mathrm{Li}_{2}(x)$ is defined by
\[
\mathrm{Li}_{2}(x):=\sum^{\infty}_{k=1}\frac{x^{k}}{k^{2}}.
\]
\begin{lem}
The $(n+1)\times(n+1)$ shifted Hankel matrix, for $r=0,1,\ldots$,
\[
G^{(r)}_{n}:=\left(\beta_{i+j+r}\right)_{0\leq i,j\leq n}
\]
is positive definite. 
\end{lem}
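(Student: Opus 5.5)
The plan is the standard moment-matrix argument. For a real vector $v=(v_{0},\ldots,v_{n})^{T}\neq0$, form the polynomial $p(t)=\sum_{i=0}^{n}v_{i}t^{i}$. Using the representation $\beta_{k}=\int_{0}^{\infty}t^{k}\,d\rho(t)$ with $d\rho(t)=dt/(e^{2\pi\sqrt{t}}-1)$ and linearity of the integral, we get
\[
v^{T}G^{(r)}_{n}v=\sum_{i,j=0}^{n}v_{i}v_{j}\beta_{i+j+r}=\int_{0}^{\infty}t^{r}p(t)^{2}\,\frac{dt}{e^{2\pi\sqrt{t}}-1}.
\]
First we must check that every integral involved converges absolutely, so that exchanging the finite sum with the integral is legitimate. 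Near $t=0$ the weight behaves like $1/(2\pi\sqrt{t})$, which is integrable. At infinity the factor $e^{-2\pi\sqrt{t}}$ dominates any polynomial. Hence every moment $\int_{0}^{\infty}t^{k}\,d\rho(t)$ is finite for $k\ge0$.

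Positivity then follows directly. On $(0,\infty)$ the integrand $t^{r}p(t)^{2}/(e^{2\pi\sqrt{t}}-1)$ is continuous and nonnegative. It vanishes only at the finitely many real zeros of $p$, together with $t=0$ when $r\geq1$. Since $p\not\equiv0$, the integrand is strictly positive on a set of positive Lebesgue measure. Therefore $v^{T}G^{(r)}_{n}v>0$ for every nonzero $v$, and $G^{(r)}_{n}$, which is obviously symmetric, is positive definite.

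As a safeguard, I would also confirm the identity $\beta_{k}=2\int_{0}^{\infty}x^{2k+1}/(e^{2\pi x}-1)\,dx$, including its sign, from the quoted DLMF formula. Substituting $2k+2$ for $2k$ gives $B_{2k+2}=(-1)^{k}4(k+1)\int_{0}^{\infty}x^{2k+1}/(e^{2\pi x}-1)\,dx$. Multiplying by $(-1)^{k}/(2k+2)$ yields exactly the stated formula. The substitution $t=x^{2}$ then turns $2x^{2k+1}\,dx$ into $t^{k}\,dt$.

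There is no real obstacle here. The only points needing care are the integrability near $t=0$ and the observation that a nonzero polynomial cannot vanish on a set of positive measure.
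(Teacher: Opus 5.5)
Your proof is correct and follows the same route as the paper: you write $v^{T}G^{(r)}_{n}v$ as $\int_{0}^{\infty}t^{r}p(t)^{2}\,d\rho(t)$ and note that the integrand is nonnegative and positive on a set of positive measure. Your extra checks, namely convergence of the moments near $t=0$ and at infinity and the sign in the DLMF-based formula for $\beta_{k}$, are correct, and they fill in details the paper leaves implicit.
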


\begin{proof}
Symmetry is guaranteed automatically by the definition of $G^{(r)}_{n}$.
Consider any nonzero column vector 
\[
\vec{u}:=(u_{0},u_{1},\ldots,u_{n})^{T}\in\mathbb{R}^{n+1},
\]
and the associated (non-zero) polynomial 
\[
p_{\vec{u}}(t):=u_{0}+u_{1}t+\cdots+u_{n}t^{n}.
\]
Observe that 
\begin{align*}
\vec{u}^{T}G^{(r)}_{n}\vec{u} & =\sum^{n}_{i,j=0}u_{i}u_{j}\beta_{i+j+r}=\sum^{n}_{i,j=0}u_{i}u_{j}\int^{\infty}_{0}t^{i+j+r}d\rho(t)\\
 & =\int^{\infty}_{0}t^{r}\sum^{n}_{i,j=0}u_{i}u_{j}t^{i+j}d\rho(t)=\int^{\infty}_{0}t^{r}\left(p_{\vec{u}}(t)\right)^{2}d\rho(t).
\end{align*}
On the interval $(0,\infty)$ all three pieces $t^{r}$, $p_{\vec{u}}(t)^{2}$,
and $d\rho$ of the integrand are nonnegative everywhere and strictly positive on a small interval. This shows 
\[
\vec{u}^{T}G^{(r)}_{n}\vec{u}>0.\qedhere
\]
\end{proof}

Now, we denote 
\[
\eta_{k}:=(-1)^{k}\beta_{k}=\frac{B_{2k+2}}{2k+2}=\mu_{2k+1}.
\]
If we let, for $r=0,1,\ldots$, 
\[
A^{(r)}_{n}:=\left(\eta_{i+j+r}\right)_{0\leq i,j\leq n}
\]
be the shifted Hankel matrix and consider the diagonal matrix
\[
D_{n}:=\mathrm{diag}(1,-1,1,-1,\ldots,(-1)^{n}),
\]
we see 
\[
A^{(r)}_{n}=(-1)^{r}D_{n}G^{(r)}_{n}D_{n}.
\]
And consequently, 
\[
\det A^{(r)}_{n}=(-1)^{r(n+1)}\det(G^{(r)}_{n})\neq0.
\]
Now, we are ready to show two Hankel determinants are nonzero: $H_{n}(\mu_{k})$
and $H_{n}(\mu_{k+1})$, where the former is used to guarantee the existence
of orthogonal polynomials, without knowing their explicit expressions;
while the latter is one of the conditions required in Thm.~\ref{thm:tndn},
which is easier to prove. 
\begin{prop}
For all $n=0,1,2,\ldots,$ $H_{n}(\mu_{k+1})\neq0.$
\end{prop}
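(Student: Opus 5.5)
The sequence $(\mu_{k+1})_{k\geq0}=(\mu_1,\mu_2,\mu_3,\ldots)$ has every odd-indexed term zero: $\mu_{k+1}=B_{k+2}/(k+2)$, and $B_{k+2}=0$ for odd $k$ because $k+2\geq3$ is odd. Its nonzero terms are the even-indexed ones, $\mu_{2m+1}=\eta_m$. So the Hankel matrix $M=(\mu_{i+j+1})_{0\le i,j\le n}$ is a checkerboard matrix with $M_{i,j}=0$ whenever $i+j$ is odd. Part (1) of Lemma~\ref{lem:Checker}, equation \eqref{eq:CheckerOdd0}, applies to this $(n+1)\times(n+1)$ matrix and factors its determinant into two blocks.

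We now identify the two blocks.

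\begin{itemize}
\item The even block has entries $M_{2i,2j}=\mu_{2i+2j+1}=\eta_{i+j}$ for $0\le i,j\le\lfloor n/2\rfloor$. This is exactly $A^{(0)}_{\lfloor n/2\rfloor}$.
\item The odd block has entries $M_{2i+1,2j+1}=\mu_{2i+2j+3}=\eta_{i+j+1}$ for $0\le i,j\le\lfloor (n-1)/2\rfloor$. This is exactly $A^{(1)}_{\lfloor (n-1)/2\rfloor}$.
\end{itemize}

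Hence
\[
H_n(\mu_{k+1})=\det A^{(0)}_{\lfloor n/2\rfloor}\cdot\det A^{(1)}_{\lfloor (n-1)/2\rfloor}.
\]
By the relation $A^{(r)}_m=(-1)^rD_mG^{(r)}_mD_m$ and the preceding lemma, each $G^{(r)}_m$ is positive definite. So $\det A^{(r)}_m=(-1)^{r(m+1)}\det G^{(r)}_m\neq0$, and the product is nonzero.

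The main thing to check is the degenerate case $n=0$. There the odd block is empty, since $\lfloor(n-1)/2\rfloor=-1$. We use the convention that the determinant of an empty matrix is $1$, or we verify directly that $H_0(\mu_{k+1})=\mu_1=B_2/2=1/12\neq0$. Otherwise the argument is a direct combination of Lemma~\ref{lem:Checker} with the positive-definiteness lemma. The only other care needed is the index bookkeeping that matches the checkerboard sub-blocks with the shifts $r=0$ and $r=1$.
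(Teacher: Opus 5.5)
Your proof is correct and follows the paper's argument: you factor the checkerboard Hankel matrix into $\det A^{(0)}_{\lfloor n/2\rfloor}\cdot\det A^{(1)}_{\lfloor (n-1)/2\rfloor}$ and then use positive definiteness of $G^{(r)}_m$. You also cite the right part of Lemma~\ref{lem:Checker}, namely \eqref{eq:CheckerOdd0} (zeros where $i+j$ is odd), whereas the paper's text points to \eqref{eq:CheckerEven01}, and your single floor-function formula covers the small cases $n=0,1$ cleanly.
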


\begin{proof}
The shifted sequence $\mu_{k+1}=B_{k+2}/(k+2)$ has all odd-indexed terms being
$0$. Then we begin with $H_n(\mu_{k+1})=B_2/2\neq 0$. For $n\geq 1$, by \eqref{eq:CheckerEven01}, we have 
\[
H_{2n+1}(\mu_{k+1})=\det A^{(0)}_{n}\det A^{(1)}_{n}\neq0,
\]
and 
\[
H_{2n}(\mu_{k+1})=\det A^{(0)}_{n}\det A^{(1)}_{n-1}\neq0.\qedhere
\]
\end{proof}

\begin{prop}
For all $n=0,1,2,\ldots,$$H_{n}(\mu_{k})\neq0$. 
\end{prop}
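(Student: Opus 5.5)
The plan is to reduce $H_n(\mu_k)$ to the checkerboard determinants already handled, by splitting off the single entry that breaks the checkerboard pattern. Recall $\mu_0=B_1=-1/2\neq0$, $\mu_{2m}=B_{2m+1}/(2m+1)=0$ for $m\ge1$, and $\mu_{2k+1}=\eta_k$. So the matrix $(\mu_{i+j})_{0\le i,j\le n}$ vanishes at every position with $i+j$ even, except at $(0,0)$.

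\textbf{Step 1 (split the determinant).} I use linearity of the determinant in the first row, writing its $(0,0)$ entry $\mu_0$ as $\mu_0+0$. This gives
\[
H_n(\mu_k)=\mu_0\det\left(\mu_{i+j+2}\right)_{0\le i,j\le n-1}+\det M,
\]
where $M$ is the Hankel matrix with its $(0,0)$ entry replaced by $0$. For $n=0$ the first determinant is empty, equal to $1$.

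\textbf{Step 2 (analyze the two pieces).} The matrix $M$ has $M_{i,j}=0$ whenever $i+j$ is even, and the minor $M'=(\mu_{i+j+2})$ has the same property, since $i+j+2\ge2$ is even exactly when $i+j$ is even. I then apply Lemma~\ref{lem:Checker}(2).
\begin{itemize}
\item For $M$, of size $n+1$: we have $M_{2i+1,2j}=M_{2i,2j+1}=\mu_{2i+2j+1}=\eta_{i+j}$. So if $n+1$ is even, $\det M=\pm\det A^{(0)}_{(n-1)/2}\cdot\det A^{(0)}_{(n-1)/2}$, and if $n+1$ is odd, $\det M=0$.
\item For $M'$, of size $n$: we have $M'_{2i+1,2j}=M'_{2i,2j+1}=\mu_{2i+2j+3}=\eta_{i+j+1}$. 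So if $n$ is even, $\det M'=\pm\bigl(\det A^{(1)}_{n/2-1}\bigr)^2$, and if $n$ is odd, $\det M'=0$.
\end{itemize}

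\textbf{Step 3 (conclude).} Exactly one of the two terms survives, so no cancellation or sign bookkeeping is needed.
\begin{itemize}
\item If $n$ is even, $H_n(\mu_k)=\mu_0\det M'=\pm\tfrac12\bigl(\det A^{(1)}_{n/2-1}\bigr)^2$.
\item If $n$ is odd, $H_n(\mu_k)=\det M=\pm\bigl(\det A^{(0)}_{(n-1)/2}\bigr)^2$.
\end{itemize}
Both are nonzero because $\det A^{(r)}_m=(-1)^{r(m+1)}\det G^{(r)}_m\neq0$, by the positive definiteness lemma. The case $n=0$ is direct: $H_0=\mu_0=-1/2$.

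The main obstacle is the index bookkeeping in Step 2. I need to confirm that the extracted odd/even sub-blocks are exactly $A^{(0)}$ and $A^{(1)}$ of the stated sizes, i.e.\ $\lfloor\frac{n}{2}\rfloor$ and $\lfloor\frac{n-1}{2}\rfloor$ in the lemma's notation. I also need to confirm that the parities make precisely one term vanish in each case, which is what removes any need to track signs.
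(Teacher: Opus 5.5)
Your proof is correct. It reaches the same reduction as the paper, to squares of $\det A^{(0)}_m$ or $\det A^{(1)}_{m-1}$, but by a different mechanism.

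The paper simultaneously permutes rows and columns into block form. For $n=2m+1$ it obtains $\begin{pmatrix} S_m & A^{(0)}_m\\ A^{(0)}_m & 0\end{pmatrix}$ and reads off $\pm\bigl(\det A^{(0)}_m\bigr)^2$. For $n=2m$ it left-multiplies by a unipotent matrix built from $\bigl(A^{(1)}_{m-1}\bigr)^{-1}$ to clear the row $\vec v_m^T$ beside $\mu_0$, then expands along $\mu_0$; this is a Schur-complement step.

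You instead split the first row by multilinearity, so both pieces are genuine checkerboard matrices. Lemma~\ref{lem:Checker}(2) then applies verbatim, and the parity of the sizes kills exactly one piece. The bookkeeping you flagged is fine:
\begin{itemize}
\item For $n=2m+1$, the blocks $M_{2i+1,2j}$ and $M_{2i,2j+1}$ with $0\le i,j\le m$ never touch the modified $(0,0)$ entry, and both equal $A^{(0)}_m$.
\item For $n=2m$, both blocks of $M'$ are $(\eta_{i+j+1})_{0\le i,j\le m-1}=A^{(1)}_{m-1}$.
\end{itemize}

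Your route has several advantages. It reuses the checkerboard lemma, which the paper applies only to $H_n(\mu_{k+1})$. It needs no matrix inversion, so the identities hold for any sequence with $c_{2j}=0$ for $j\ge1$, independent of nondegeneracy. It also explains why the $S_m$ block is irrelevant in the paper's odd case: the cofactor of $\mu_0$ there is your $\det M'$, an odd-size checkerboard determinant, hence $0$.

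Finally, tracking the factor $(-1)^{n/2}$ from Lemma~\ref{lem:Checker} gives exact signs, where the paper records only $(\pm)$:
\[
H_{2m}(\mu_k)=(-1)^m\mu_0\bigl(\det A^{(1)}_{m-1}\bigr)^2,\qquad H_{2m+1}(\mu_k)=(-1)^{m+1}\bigl(\det A^{(0)}_m\bigr)^2.
\]
These agree with the small cases $H_1=-\eta_0^2$ and $H_2=-\mu_0\eta_1^2$, and with the signs predicted by the paper's product formula for $H_n(\mu_k)$.
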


\begin{proof}
Let 
\[
M_{n}:=\left(\mu_{i+j}\right)_{0\leq i,j\leq n}
\]
be the Hankel matrix of order $n+1$; and denote
\[
S_{n}=\mathrm{diag}\left(\mu_{0},\underset{n\,\text{copies}}{\underbrace{0,0,\ldots,0}}\right).
\]
\begin{enumerate}
\item For the odd case that $n=2m+1$ case, $M_{n}$ has even order of $2m+2$.
After simultaneously permuting rows and columns, it becomes
\[
\bar{M}_{n}=\begin{pmatrix}S_{m} & A^{(0)}_{m}\\
A^{(0)}_{m} & \mathbf{0}_{(m+1)\times(m+1)}
\end{pmatrix},
\]
and such permutations only create an extra factor of either $1$ or $-1$, for which we denoted by $(\pm)$. Then,  
\[
H_{2m+1}(\mu_{k})=\det M_{2m+1}=(\pm)\left(\det A^{(0)}_{m}\right)^{2}\neq0.
\]
\item For the even case that $n=2m$, similar permutations lead to
\[
\tilde{M}_{n}=\begin{pmatrix}\mu_{0} & 0 & \vec{v}^{T}_{m}\\
0 & \mathbf{0}_{m\times m} & A^{(1)}_{m-1}\\
\vec{v}_{m} & A^{(1)}_{m-1} & \mathbf{0}_{m\times m}
\end{pmatrix},
\]
where 
\[
\vec{v}_{m}=\left(\mu_{1},\mu_{3},\ldots,\mu_{2m-1}\right)^{T}=\left(\eta_{0},\eta_{1},\ldots,\eta_{m-1}\right)^{T}.
\]
Consider the following $(2m+1)\times(2m+1)$ upper triangular matrix
\[
L_{m}:=\begin{pmatrix}1 & -\vec{v}^{T}_{m}\left(A^{(1)}_{m-1}\right)^{-1} & \mathbf{0}_{1\times m}\\
\mathbf{0}_{m\times1} & I_{m} & \mathbf{0}_{m\times m}\\
\mathbf{0}_{m\times1} & \mathbf{0}_{m\times m} & I_{m}
\end{pmatrix},
\]
where $I_{m}$ is the $m$ by $m$ identities matrix. 
\begin{align*}
L_{m}\tilde{M}_{n} & =\begin{pmatrix}\mu_{0} & \mathbf{0}_{1\times m} & \vec{v}^{T}_{m}-\vec{v}^{T}_{m}\left(A^{(1)}_{m-1}\right)^{-1}A^{(1)}_{m-1}\\
\mathbf{0}_{m\times1} & \mathbf{0}_{m\times m} & A^{(1)}_{m-1}\\
\vec{v}_{m} & A^{(1)}_{m-1} & \mathbf{0}_{m\times m}
\end{pmatrix}\\
 & =\begin{pmatrix}\mu_{0} & \mathbf{0}_{1\times m} & \mathbf{0}_{1\times m}\\
\mathbf{0}_{m\times1} & \mathbf{0}_{m\times m} & A^{(1)}_{m-1}\\
\vec{v}_{m} & A^{(1)}_{m-1} & \mathbf{0}_{m\times m}
\end{pmatrix}.
\end{align*}
Since $\det L_{m}=1$, we have 
\begin{align*}
\det\tilde{M}_{n} & =\det\begin{pmatrix}\mu_{0} & \mathbf{0}_{1\times m} & \mathbf{0}_{1\times m}\\
\mathbf{0}_{m\times1} & \mathbf{0}_{m\times m} & A^{(1)}_{m-1}\\
\vec{v}_{m} & A^{(1)}_{m-1} & \mathbf{0}_{m\times m}
\end{pmatrix}\\
 & =\mu_{0}\det\begin{pmatrix}\mathbf{0}_{m\times m} & A^{(1)}_{m-1}\\
A^{(1)}_{m-1} & \mathbf{0}_{m\times m}
\end{pmatrix}=(\pm)\mu_{0}\left(\det A^{(1)}_{m-1}\right)^{2}\neq0.
\end{align*}
Hence, 
\[
\det M_{n}=(-1)^{m^{2}}\det\tilde{M}_{n}\neq0.\qedhere
\]
\end{enumerate}
\end{proof}

We can now apply Thm.~\ref{thm:tndn} to $\mu_{k}$. 
\begin{cor}
If $R_{n}(y)=y^{n}+\bar{d}_{n}y^{n-1}+\cdots$ is the monic orthogonal
polynomial of degree $n$ with respect to $\mu_{k}$, then
\[
H_{n}(\mu_{k})=(-1)^{\binom{n+1}{2}}\left(-\frac{1}{2}\right)^{n+1}\prod^{n}_{\ell=1}\bar{d}^{2(n+1-\ell)}_{\ell}.
\]
\end{cor}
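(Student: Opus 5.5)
The corollary follows by combining Thm.~\ref{thm:tndn} with the Hankel determinant formula in terms of the recurrence coefficients $t_{n}$. The first step is to check that $c_{k}=\mu_{k}$ satisfies the two hypotheses of Subsection~\ref{subsec:GeneralParity}. For $m\in\mathbb{N}$ we have $\mu_{2m}=B_{2m+1}/(2m+1)=0$, since the only nonzero odd-indexed Bernoulli number is $B_{1}$. Also $\mu_{0}=B_{1}=-1/2\neq0$. The shifted determinants $H_{n}(\mu_{k+1})$ are nonzero for all $n$ by the first of the two propositions just proved. In addition, $H_{n}(\mu_{k})\neq0$ for all $n$ by the second proposition. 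This guarantees that the monic orthogonal polynomials $R_{n}(y)$ exist and are unique, so they satisfy a three-term recurrence of the form \eqref{eq:3TermRec} with coefficients $s_{n},t_{n}$.

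Next, Thm.~\ref{thm:tndn} applied to $c_{k}=\mu_{k}$ and $P_{n}=R_{n}$ gives $t_{\ell}=\bar d_{\ell}^{2}$ for every $\ell\geq1$. We then substitute this into the earlier corollary
\[
H_{n}(c_{k})=(-1)^{\binom{n+1}{2}}c_{0}^{n+1}t_{1}^{n}t_{2}^{n-1}\cdots t_{n}
=(-1)^{\binom{n+1}{2}}c_{0}^{n+1}\prod_{\ell=1}^{n}t_{\ell}^{\,n+1-\ell}.
\]
With $c_{0}=\mu_{0}=-1/2$ and $t_{\ell}^{\,n+1-\ell}=\bar d_{\ell}^{\,2(n+1-\ell)}$, this is exactly the stated formula.

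There is no substantive obstacle here, since all the work has been done in Thm.~\ref{thm:tndn} and the two nondegeneracy propositions. The only points needing care are bookkeeping ones. First, the nonvanishing of $R_{n}(0)$ used inside the proof of Thm.~\ref{thm:tndn} is supplied precisely by $H_{n-1}(\mu_{k+1})\neq0$. Second, the sign conventions of the Hankel-determinant corollary, with $\prod(-t_{\ell})^{n+1-\ell}$, must match the recurrence $P_{n+1}=(y+s_{n})P_{n}+t_{n}P_{n-1}$ used in Thm.~\ref{thm:tndn}. Both use the same convention \eqref{eq:3TermRec}, so the identification $t_{\ell}=\bar d_{\ell}^{2}$ can be substituted directly.
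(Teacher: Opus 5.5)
Your proposal is correct and is exactly the argument the paper intends when it says it can now apply Thm.~\ref{thm:tndn} to $\mu_{k}$. You check $\mu_0=-1/2\neq0$, $\mu_{2m}=0$, and the two nondegeneracy propositions, obtain $t_\ell=\bar d_\ell^{2}$, and substitute into $H_n(c_k)=(-1)^{\binom{n+1}{2}}c_0^{n+1}\prod_{\ell=1}^{n}t_\ell^{\,n+1-\ell}$ with $c_0=\mu_0=-1/2$; your bookkeeping remarks on $R_n(0)\neq0$ and on the sign convention of \eqref{eq:3TermRec} are accurate.
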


\subsection{Two Continued Fractions for $\gamma$\label{subsec:ContinuedFractions}}

As defined above that $R_{n}(y)$ is the monic orthogonal polynomial of degree $n$
with respect to $\mu_{k}$. To make the indices compatible, we assume
the three-term recurrence of $R_{n}$ are given by 
\begin{equation}
R_{n+1}(y)=(y+\beta_{n+1})R_{n}(y)+\alpha_{n+1}R_{n-1}(y),\label{eq:3TermRn}
\end{equation}
for $n\geq 1$. Then,by letting $\alpha_1=-\mu_0 =1/2$ and
the J-fractions \eqref{eq:JCF},
\[
\sum^{\infty}_{k=0}\mu_{k}x^{k}=-\frac{\alpha_{1}}{1+\beta_{1}x+}\underset{j=2}{\stackrel{\infty}{\mathbf{K}}}\frac{\alpha_{j}x^{2}}{1+\beta_{j}x}.
\]
For comparison, Cao \cite{Cao} also has $a_1=-1/2=\alpha_1$. 

On the other hand, Cao \cite[Thm.~5, p.~1441]{Cao} refined the work
of Lu \cite{Lu} and Xu and You \cite{XuYou}, to obtain a continued
fraction approximation to the Euler-Mascheroni constant 
\[
\gamma:=\lim_{n\rightarrow\infty}\left(H_{n}-\log n\right).
\]
More precisely, define 
\[
E_{k}(n):=H_{n}-\log n-\gamma-MC_{k}(n),
\]
where 
\[
MC_{k}(n)=\begin{cases}
\frac{a_{1}}{n+b_{1}}, & k=1;\\
\frac{a_{1}}{n+b_{1}+}\underset{j=2}{\stackrel{k}{\mathbf{K}}}\frac{a_{j}}{n+b_{j}}, & k\geq2,
\end{cases}
\]
and the first few terms of the two sequences, $a_{k}$ and $b_{k}$,
are listed as follows 

\begin{onehalfspace}
\[
\begin{array}{ll}
a_{1}=\frac{1}{2}, & b_{1}=\frac{1}{6},\\
a_{2}=\frac{1}{36}, & b_{2}=\frac{13}{30},\\
a_{3}=\frac{9}{25}, & b_{3}=\frac{17}{630},\\
a_{4}=\frac{6241}{15876}, & b_{4}=\frac{417941}{786366},\\
a_{5}=\frac{52272900}{38950081}, & b_{5}=-\frac{1835967509}{23923912386},\\
a_{6}=\frac{17194548650161}{14694541555716}, & b_{6}=\frac{431312596940299603}{686480136010816290},\\
a_{7}=\frac{93778512198179213368089}{32070070056327569608225}, & b_{7}=-\frac{75178865368857369613934863}{437108607837436422694763190},\\
a_{8}=\frac{14093175882028689333655328914081}{5957702453097198927838844740836}, & b_{8}=\frac{152838545298199920648591716358691154137}{212256305311307139071033336233757302422}.
\end{array}
\]

\end{onehalfspace}

\noindent Then, for all $k\in\mathbb{N}$, 
\begin{align}
\lim_{n\rightarrow\infty}n^{2k+2}\left(E_{k}(n)-E_{k}(n+1)\right) & =(2k+1)C_{k},\nonumber \\
\lim_{n\rightarrow\infty}n^{2k+1}E_{k}(n) & =C_{k},\label{eq:CaoAppr}
\end{align}
for a sequence of constants $C_{k}$. The first eight terms of $C_{k}$
are also given in \cite[Thm.~5, p.~1441]{Cao}. This series of work
provides faster convergent sequences to certain constants, including
$\gamma$. However, no explicit formulas for $a_{k}$, $b_{k}$, or
$C_{k}$ are given. The goal of this subsection is to show that for $j\geq 1$ $a_{j+1}=\alpha_{j+1}$
and $b_{j}=\beta_{j}$, where $\alpha_{k}$ and $\beta_{n}$ in \eqref{eq:3TermRn}. 

Consider the digamma function
\[
\psi(x):=\frac{d}{dx}\left(\log\Gamma(x)\right)=\frac{\Gamma'(x)}{\Gamma(x)},
\]
which satisfies \cite[(6.3.2), p.~258]{Handbook} $\psi(1)=-\gamma$
and $\psi(n)=-\gamma+H_{n-1}$ for $n\geq2$. Applying the Euler-Maclaurin
summation formula to $f(x)=1/x$ on $[1,n]$, one can obtain the asymptotic
\cite[(6.3.18), p.~259]{Handbook}
\begin{equation}
\psi(x)\sim\log x-\frac{1}{2x}-\sum^{\infty}_{k=1}\frac{B_{2k}}{2k\cdot x^{2k}}=\log x-\frac{1}{2x}-\frac{1}{12x^{2}}+\frac{1}{120x^{4}}-\frac{1}{252x^{6}}+\cdots,\label{eq:Gamma_Euler_Maclaurin}
\end{equation}
as $x\rightarrow\infty$ in $\left|\arg x\right|<\pi$. Letting $x=n$
in \eqref{eq:Gamma_Euler_Maclaurin}, we have 
\[
H_{n-1}-\gamma\sim\log n-\frac{1}{2n}-\sum^{\infty}_{k=1}\frac{B_{2k}}{2k\cdot n^{2k}},
\]
or, more precisely, for $N\in\mathbb{N}$ and as $n\rightarrow\infty$,
\[
H_{n}-\log n-\gamma-\left(\frac{1}{2n}-\sum^{N}_{m=1}\frac{B_{2m}}{2m\cdot n^{2m}}\right)=O\left(n^{-2N-2}\right).
\]
Let 
\[
\Phi(x):=\frac{x}{2}-\sum^{\infty}_{m=1}\frac{B_{2m}}{2m}x^{2m}=-x\sum^{\infty}_{k=0}\mu_{k}x^{k},
\]
so that 
\[
\Phi(x)=(-x)\frac{\frac{1}{2}}{1+\beta_{1}x+}\underset{j=2}{\stackrel{\infty}{\mathbf{K}}}\frac{-\alpha_{j}x^{2}}{1+\beta_{j}x}=\frac{\alpha_1 x}{1+\beta_{1}x+}\underset{j=2}{\stackrel{\infty}{\mathbf{K}}}\frac{-\alpha_{j}x^{2}}{1+\beta_{j}x}.
\]
If we let 
\[
J_{k}(x):=\frac{-\frac{1}{2}x}{1+\beta_{1}x+}\underset{j=2}{\stackrel{k}{\mathbf{K}}}\frac{\alpha_{j}x^{2}}{1+\beta_{j}x}
\]
be the $k$-th convergent and for any $n$, $(a_{n},b_{n})=(\alpha_{n},\beta_{n})$
is equivalent to for any $k$ large enough (i.e., $k>n+1$,)
\[
J_{k}\left(\frac{1}{n}\right)=MC_{k}(n).
\]

\begin{itemize}
\item First of all, $a_{k}$ and $b_{k}$ are defined to guarantee \eqref{eq:CaoAppr},
namely, as $n\rightarrow\infty$
\[
E_{k}(n):=H_{n}-\log n-\gamma-MC_{k}(n)=O\left(n^{-2k-1}\right);
\]
and meanwhile, the Euler-Maclaurin summation formula also implies
\[
H_{n}-\log n-\gamma-J_{k}\left(\frac{1}{n}\right)=O\left(n^{-2k-1}\right).
\]
This shows 
\[
MC_{k}(n)-J_{k}\left(\frac{1}{n}\right)=O\left(n^{-2k-1}\right).
\]
\item Let $U_{j}(n)$ and $V_{j}(n)$ be the $j$-th denominator tails of
the two continued fractions $MC_{k}(n)$ and $J_{k}(1/n)$, respectively.
Namely, the two recurrences hold
\begin{align}
U_{j}(n) & =n+b_{j}+\frac{a_{j+1}}{U_{j+1}(n)},\label{eq:RecUj}\\
V_{j}(n) & =1+\frac{\beta_{j}}{n}+\frac{\frac{\alpha_{j+1}}{{n^2}}}{V_{j+1}(n)}=1+\frac{\beta_{j}}{n}+\frac{\alpha_{j+1}}{n^{2}V_{j+1}(n)},\label{eq:RecnVj}
\end{align}
where the second recurrence is equivalent to 
\[
nV_{j}(n)=n+\beta_{j}+\frac{\alpha_{j+1}}{nV_{j+1}(n)}.
\]
This further indicates that 
\[
J_{k}\left(\frac{1}{n}\right)=\frac{\frac{a_{1}}{n}}{V_{1}(n)}=\frac{a_{1}}{nV_{1}(n)}=\frac{a_{1}}{n+\beta_{1}+\frac{\alpha_{2}}{nV_{2}(n)}}=\frac{\alpha_{1}}{n+\beta_{1}+}\underset{j=2}{\stackrel{k}{\mathbf{K}}}\frac{\alpha_{j}}{nV_{j}(n)}.
\]
\end{itemize}
The following lemma is one key step, in which we adopted the \emph{big-O
notation}. We write 
\[
f(x)=O(g(x))
\]
as $x\rightarrow a$ or $x\rightarrow\infty$, if, when $x$ is approaching
the limit, $\left|f(x)\right|\leq M\left|g(x)\right|$, for some positive
real number $M$. In the rest of this paper, we will consider (and
sometimes omit) the limit as $n\rightarrow\infty$. 
\begin{lem}
\label{lem:BigOUjANDnVj}For all $1\leq j\leq k$, 
\[
U_{j}(n)=n+b_{j}+O\left(\frac{1}{n}\right)
\]
 and 
\[
nV_{j}(n)=n+\beta_{j}+O\left(\frac{1}{n}\right).
\]
\end{lem}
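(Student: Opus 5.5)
We argue by backward induction on $j$, starting from the innermost tail $j=k$ and moving outward to $j=1$, treating both continued fractions in parallel. Since $MC_{k}(n)$ and $J_{k}(1/n)$ are the $k$-th approximants, their innermost tails are simply
\[
U_{k}(n)=n+b_{k},\qquad nV_{k}(n)=n+\beta_{k},
\]
with no further fraction attached. The claim for $j=k$ therefore holds trivially, with error term identically zero.

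For the inductive step, suppose that for some $j<k$ we already know
\[
U_{j+1}(n)=n+b_{j+1}+O(1/n).
\]
Then $U_{j+1}(n)\sim n$; in particular $|U_{j+1}(n)|\geq n/2$ for all sufficiently large $n$. The recurrence \eqref{eq:RecUj} then gives
\[
U_{j}(n)=n+b_{j}+\frac{a_{j+1}}{U_{j+1}(n)},
\]
and the last term is bounded by $2|a_{j+1}|/n=O(1/n)$, since $a_{j+1}$ is a fixed constant independent of $n$. The identical argument applied to $nV_{j}(n)=n+\beta_{j}+\alpha_{j+1}/(nV_{j+1}(n))$ yields $nV_{j}(n)=n+\beta_{j}+O(1/n)$. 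Induction down to $j=1$ completes the proof. The implied constants depend on $j$ and $k$, but $k$ is fixed throughout, so this causes no difficulty.

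The only point requiring care is that the denominators $U_{j+1}(n)$ and $nV_{j+1}(n)$ are nonzero and grow linearly. In particular, the continued fractions must be well defined for large $n$, even if some $b_{j}$ or $\beta_{j}$ is negative, as happens with $b_{5}$ and $b_{7}$. This is exactly what the induction hypothesis supplies: a quantity of the form $n+\text{const}+O(1/n)$ eventually exceeds $n/2$ in absolute value. The lemma is then established for $n$ larger than some threshold $N(k)$, which is all the big-$O$ statement requires. The finiteness of the $\alpha_{j}$ needs no separate argument, because they are the recurrence coefficients of the orthogonal polynomials $R_{n}(y)$, which exist by the nondegeneracy $H_{n}(\mu_{k})\neq0$ proved above.
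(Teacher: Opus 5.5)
Your proof is correct and follows the paper's argument: a backward induction from the trivial tails $U_k(n)=n+b_k$ and $nV_k(n)=n+\beta_k$, using the recurrences to bound $a_{j+1}/U_{j+1}(n)$ and $\alpha_{j+1}/(nV_{j+1}(n))$. The only difference is minor: you use the lower bound $|U_{j+1}(n)|\geq n/2$, while the paper expands $1/U_{j+1}(n)$ as a geometric series, which also gives the sharper form $n+b_j+a_{j+1}/n+O(n^{-2})$ recorded in the subsequent remark (your bound recovers this with one more line).
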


\begin{proof}
By definition, we first see 
\[
U_{k}(n)=n+b_{k}=n+b_{k}+O\left(\frac{1}{n}\right),
\]
and $V_{k}(n)=1+\beta_{k}/n$ so that
\[
nV_{k}(n)=n+\beta_{k}+O\left(\frac{1}{n}\right).
\]
If $U_{j+1}(n)=n+b_{j+1}+O(1/n)$ for some $1\leq j\leq k-1$, we
first notice 
\[
U_{j+1}(n)=n\left(1+\frac{b_{j+1}}{n}+O\left(\frac{1}{n^{2}}\right)\right).
\]
As $n\rightarrow\infty$, 
\[
\left|\frac{b_{j+1}}{n}+O\left(\frac{1}{n^{2}}\right)\right|<1,
\]
so that we apply the geometric series that 
\[
\frac{1}{1+y}=1-y+O(y^{2})
\]
to see
\[
\frac{1}{U_{j+1}(n)}=\frac{1}{n}\cdot\frac{1}{1+\frac{b_{j+1}}{n}+O\left(\frac{1}{n^{2}}\right)}=\frac{1}{n}\left(1-\frac{b_{j+1}}{n}+O\left(\frac{1}{n^{2}}\right)\right).
\]
Hence, by the recurrence \eqref{eq:RecUj}
\[
U_{j}(n)=n+b_{j}+\frac{a_{j+1}}{U_{j+1}(n)}=n+b_{j}+\frac{a_{j+1}}{n}+O\left(\frac{1}{n^{2}}\right)=n+b_{j}+O\left(\frac{1}{n}\right).
\]
The inductive proof for $nV_{j}(n)$ is similar. 
\end{proof}

\begin{rem}
It is noticeable that we further have 
\begin{equation}
U_{j}(n)=n+b_{j}+\frac{a_{j+1}}{n}+O\left(\frac{1}{n^{2}}\right),\label{eq:UjnBjAj1On2}
\end{equation}
and similarly, 
\begin{equation}
nV_{j}(n)=n+\beta_{j}+\frac{\alpha_{j+1}}{n}+O\left(\frac{1}{n^{2}}\right).\label{eq:nVjnBetajAlaphj1On2}
\end{equation}

Now, we are ready to prove our second main result. Recall that the orthogonal
polynomials $R_{n}(y)$ with respect to $\mu_{k}$, satisfy the three-term
recurrence \eqref{eq:3TermRn}, in which $\alpha_{n}$ and $\beta_{n}$
are defined. 
\end{rem}

\begin{prop}
For any $j\in\mathbb{N}$, $a_{j}=\alpha_{j}$ and $b_{j}=\beta_{j}$.
We also in particular have 
\[
\ensuremath{U_{j}(n)-nV_{j}(n)=O\left(n^{-(2k-2j+1)}\right)}.
\]
\end{prop}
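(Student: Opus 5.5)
We would prove both claims together by a descending induction on $j$, starting from $j=1$ and working inward. Fix $k$ large. The starting point is the estimate already established before Lemma~\ref{lem:BigOUjANDnVj}:
\[
MC_{k}(n)-J_{k}\left(\tfrac{1}{n}\right)=O\left(n^{-2k-1}\right).
\]
We also have $a_{1}=\alpha_{1}$ and the representations
\[
MC_{k}(n)=\frac{a_{1}}{U_{1}(n)},\qquad J_{k}\left(\tfrac{1}{n}\right)=\frac{\alpha_{1}}{nV_{1}(n)}.
\]
Combining these gives
\[
\frac{nV_{1}(n)-U_{1}(n)}{U_{1}(n)\,nV_{1}(n)}=O\left(n^{-2k-1}\right).
\]
By Lemma~\ref{lem:BigOUjANDnVj}, both $U_{1}(n)$ and $nV_{1}(n)$ are $n+O(1)$. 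Multiplying through by their product therefore yields $U_{1}(n)-nV_{1}(n)=O(n^{-2k+1})$, which is the asserted bound for $j=1$.

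For the inductive step, suppose $U_{j}(n)-nV_{j}(n)=O(n^{-(2k-2j+1)})$ and $a_{i}=\alpha_{i}$ for $i\le j$. The expansions \eqref{eq:UjnBjAj1On2} and \eqref{eq:nVjnBetajAlaphj1On2} give
\[
U_{j}(n)-nV_{j}(n)=(b_{j}-\beta_{j})+\frac{a_{j+1}-\alpha_{j+1}}{n}+O\left(n^{-2}\right).
\]
Whenever $2k-2j+1\ge 2$, that is $j\le k-1$, the left side is $O(n^{-2})$. Comparing the constant and $1/n$ coefficients then forces $b_{j}=\beta_{j}$ and $a_{j+1}=\alpha_{j+1}$. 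Feeding this into the recurrences \eqref{eq:RecUj} and \eqref{eq:RecnVj} gives
\[
U_{j}(n)-nV_{j}(n)=a_{j+1}\left(\frac{1}{U_{j+1}(n)}-\frac{1}{nV_{j+1}(n)}\right)=a_{j+1}\,\frac{nV_{j+1}(n)-U_{j+1}(n)}{U_{j+1}(n)\,nV_{j+1}(n)}.
\]
Again the denominator is $n^{2}(1+O(1/n))$ by Lemma~\ref{lem:BigOUjANDnVj}. Provided $a_{j+1}\neq0$, we obtain $U_{j+1}(n)-nV_{j+1}(n)=O(n^{-(2k-2j+1)+2})=O(n^{-(2k-2(j+1)+1)})$, which closes the induction.

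Nonvanishing of $a_{j+1}=\alpha_{j+1}$ is exactly where the nondegeneracy of Subsection~\ref{subsec:Nondegeneracy} enters. The J-fraction coefficient $\alpha_{j+1}$ is, up to sign, the ratio $H_{j}H_{j-2}/H_{j-1}^{2}$ of Hankel determinants of $\mu_{k}$. By the proposition $H_{n}(\mu_{k})\neq0$, all of these are nonzero, so $\alpha_{j+1}\neq0$.

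Since $k$ can be taken arbitrarily large, for each fixed $j$ we choose $k\ge j+1$. This yields $a_{j}=\alpha_{j}$ and $b_{j}=\beta_{j}$ for every $j\in\mathbb{N}$, together with the stated error bound for all $1\le j\le k$.

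The main obstacle is bookkeeping rather than ideas. The implicit constants in the $O(\cdot)$ terms depend on $k$ and $j$, which is harmless because $k$ is fixed throughout each induction. More delicate is that the coefficient comparison needs the error exponent to be at least $2$, which is why we only extract $b_{j}$ and $a_{j+1}$ for $j\le k-1$ and then let $k$ grow. One further check is needed: the starting estimate $E_{k}(n)=O(n^{-2k-1})$ for Cao's fraction, and the matching estimate for $J_{k}$, must hold at the same order. For $J_{k}$ this follows because the $k$-th convergent of the J-fraction agrees with the power series of $\Phi$ through order $x^{2k}$. If $x^{2k}$ turned out not to be reached, we would shift indices and work with $MC_{k}$ versus $J_{k}$ at a common truncation level.
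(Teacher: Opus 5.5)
Your proposal is correct and follows the paper's proof essentially step for step. Both run an induction on $j$ starting at $j=1$: the tail recurrences \eqref{eq:RecUj}--\eqref{eq:RecnVj} and Lemma~\ref{lem:BigOUjANDnVj} carry the bound on $U_j(n)-nV_j(n)$ to level $j+1$ at the cost of a factor $n^{2}$, and the expansions \eqref{eq:UjnBjAj1On2}--\eqref{eq:nVjnBetajAlaphj1On2} force $b_j=\beta_j$ and $a_{j+1}=\alpha_{j+1}$. Your additions make explicit two points the paper leaves implicit: you check $a_{j+1}=\alpha_{j+1}\neq0$ via $H_n(\mu_k)\neq0$ before dividing, where the paper divides by $a_j$ without comment, and you state the condition $j\le k-1$ for the coefficient comparison, where the paper says only ``for $k$ large enough.''
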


\begin{proof}
The proof by induction begins with $j=1$. Since $a_{1}=\alpha_{1}=1/2$,
from
\[
O\left(n^{-2k-1}\right)=MC_{k}(n)-J_{k}\left(\frac{1}{n}\right)=\frac{a_{1}}{U_{1}(n)}-\frac{\alpha_{1}}{nV_{1}(n)},
\]
we have
\[
\frac{U_{1}(n)-nV_{1}(n)}{U_{1}(n)\cdot nV_{1}(n)}=O\left(n^{-2k-1}\right).
\]
By Lem.~\ref{lem:BigOUjANDnVj}, we see 
\[
U_{1}(n)-nV_{1}(n)=O\left(n^{-2k-1+2}\right)=O\left(n^{-2k+1}\right)=O\left(n^{-(2k-2\cdot1+1)}\right).
\]
Now, by \eqref{eq:UjnBjAj1On2} and \eqref{eq:nVjnBetajAlaphj1On2},
we have 
\[
O\left(n^{-2k+1}\right)=U_{1}(n)-nV_{1}(n)=b_{1}-\beta_{1}+\frac{a_{2}-\alpha_{2}}{n}+O\left(\frac{1}{n^{2}}\right).
\]
If $k$ is large enough, we must have
\[
b_{1}=\beta_{1}\quad\text{and}\quad a_{2}=\alpha_{2}.
\]
Inductively, assume we have $a_{1}=\alpha_{1},\ldots,a_{j}=\alpha_{j}$,
$b_{1}=\beta_{1},\ldots,b_{j-1}=\beta_{j-1}$ for some $j\in\mathbb{N}$
and also 
\[
\ensuremath{U_{t}(n)-nV_{t}(n)=O\left(n^{-(2k-2t+1)}\right)}
\]
for $t=1,2,\ldots,j-1$. By the recurrences \eqref{eq:RecUj} and
\eqref{eq:RecnVj} and the inductive hypothesis,
\[
O\left(n^{-(2k-2(j-1)+1)}\right)=U_{j-1}(n)-nV_{j-1}(n)=a_{j}\left(\frac{1}{U_{j}(n)}-\frac{1}{nV_{j}(n)}\right),
\]
namely, 
\[
\frac{U_{j}(n)-nV_{j}(n)}{U_{j}(n)\cdot nV_{j}(n)}=O\left(n^{-(2k-2(j-1)+1)}\right).
\]
By Lem.~\ref{lem:BigOUjANDnVj}, we see 
\[
U_{j}(n)-nV_{j}(n)=O\left(n^{-(2k-2(j-1)+1)+2}\right)=O\left(n^{-(2k-2j+1)}\right).
\]
Again, by \eqref{eq:UjnBjAj1On2} and \eqref{eq:nVjnBetajAlaphj1On2},
\[
O\left(n^{-(2k-2j+1)}\right)=b_{j}-\beta_{j}+\frac{a_{j+1}-\alpha_{j+1}}{n}+O\left(\frac{1}{n^{2}}\right).
\]
This implies, for $k$ large enough, $b_{j}=\beta_{j}$ and $a_{j+1}=\alpha_{j+1}$. 
\end{proof}

\begin{rem}
This result is compatible with an analytic continuation result. Zagier
\cite[p.~243]{Zagier} defined 
\[
\gamma_{0}(x):=\sum^{\infty}_{n=1}\frac{B_{n}x^{n}}{n}=-\frac{x}{2}+\sum^{\infty}_{k=1}\frac{B_{2k}}{2k}x^{2k}=x\sum^{\infty}_{k=0}\mu_{k}x^{k},
\]
and showed, by analytic continuation, that it satisfies the functional equation
\begin{equation}
\gamma_{0}\left(\frac{x}{1-x}\right)-\gamma_{0}(x)=\log(1-x)+x.\label{eq:FEgamma0}
\end{equation}
Since it is trivial to see 
\[
\frac{x}{1-x}=\frac{1}{n-1}\Leftrightarrow x=\frac{1}{n},
\]
we see that \eqref{eq:FEgamma0} indicates, for $n\geq2$, 
\begin{equation}
\gamma_{0}\left(\frac{1}{n-1}\right)-\gamma_{0}\left(\frac{1}{n}\right)=\log\left(1-\frac{1}{n}\right)+\frac{1}{n}=\log(n-1)-\log n+\frac{1}{n}.\label{eq:RecurrGammaZero}
\end{equation}
Summation by telescoping yields, for $n\geq2$, 
\begin{equation}
\gamma_{0}(1)=\gamma_{0}\left(\frac{1}{n}\right)+H_{n}-\log n -1.\label{eq:Gammo01}
\end{equation}
Here, 
\begin{align*}
\gamma_{0}\left(\frac{1}{n}\right) & =x\sum^{\infty}_{k=0}\mu_{k}x^{k}\bigg|_{x=\frac{1}{n}}=-\frac{\alpha_{1}\frac{1}{n}}{1+\beta_{1}\frac{1}{n}+}\underset{j=2}{\stackrel{\infty}{\mathbf{K}}}\frac{\alpha_{j}\frac{1}{n^{2}}}{1+\frac{\beta_{j}}{n}}\\
 & =-\frac{\alpha_{1}}{n+\beta_{1}+\frac{\alpha_{2}}{n+\beta_{1}+\frac{\alpha_{3}}{n+\beta_{2}+\ddots}}},
\end{align*}
and letting $n\rightarrow\infty$ in \eqref{eq:Gammo01}
and noting $\gamma_{0}(0)=0$, we have $\gamma_{0}(1)=\gamma - 1$. Consequently, 
\[
H_n - \log n -\gamma =-\gamma_0\left(\frac{1}{n}\right) = \Phi\left(\frac{1}{n}\right).
\]
\end{rem}
This is compatible with the extra factor $-1$ in $\alpha_1=-\mu_0$. 
\section*{Acknowledgment}

We would like to thank the Duke Kunshan University Undergraduate Summer
Research Scholars Program of Summer 2026, especially for approving
the project proposal by Lin Jiu and providing financial support
for Yihang Yin. During this time, the major results of this
work were obtained. 

Yihang Yin also acknowledges the use of OpenAI Codex using the GPT-5 model during
the exploratory and proof-development stages of this work. All AI-generated
suggestions were substantially revised, corrected, and independently verified
by the authors, who assume full responsibility for the mathematical content. The use of AI
is fully in accordance with editorial standards on the responsibility, ethnicity, and transparency.

\end{document}